\theoremstyle{plain}
\newtheorem{theorem}[subsection]{Theorem}
\newtheorem{proposition}[subsection]{Proposition}
\theoremstyle{definition}
\newtheorem{remark}[subsection]{Remark}
\newtheorem{construction}[subsection]{Construction}
\newcommand{\comp}{\raisebox{0.2mm}{\ensuremath{\scriptstyle{\circ}}}}
\newcommand{\defn}{\textbf}
\newcommand{\To}{\Rightarrow}
\newcommand{\del}{\ensuremath{\partial}}
\renewcommand{\ker}{\ensuremath{\mathsf{Ker\,}}}
\newcommand{\Fun}{\ensuremath{\mathsf{Fun}}}
\newcommand{\Arrm}{\ensuremath{\mathsf{Arr}^{m}\!}}
\newcommand{\Extm}{\ensuremath{\mathsf{Ext}^{m}\!}}
\newcommand{\CExtm}{\ensuremath{\mathsf{CExt}^{m}_{\B}}}
\newcommand{\A}{\ensuremath{\mathcal{A}}}
\newcommand{\nlie}{\ensuremath{{}_{n}\mathsf{lie}}}
\newcommand{\lie}{\ensuremath{\mathsf{lie}}}
\newcommand{\Ch}{\ensuremath{\mathsf{Ch}}}
\renewcommand{\S}{\ensuremath{\mathsf{S}}}
\newcommand{\Set}{\ensuremath{\mathsf{Set}}}
\newcommand{\G}{\ensuremath{\mathbb{G}}}
\newcommand{\K}{\ensuremath{\mathbb{K}}}
\newcommand{\Hbb}{\ensuremath{\mathbb{H}}}
\newcommand{\B}{\ensuremath{\mathcal{B}}}
\newcommand{\D}{\ensuremath{\mathsf{d}}}
\newcommand{\Lie}{\ensuremath{\mathsf{Lie}}}
\newcommand{\nLie}{\ensuremath{{}_{n}\mathsf{Lie}}}
\newcommand{\BB}{\ensuremath{\mathscr{B}}}
\renewcommand{\AA}{\ensuremath{\mathscr{A}}}
\newcommand{\LL}{\ensuremath{\mathscr{L}}}
\newcommand{\HH}{\ensuremath{\mathscr{H}}}
\newcommand{\NN}{\ensuremath{\mathscr{N}}}
\newcommand{\KK}{\ensuremath{\mathscr{K}}}
\newcommand{\UU}{\ensuremath{\mathscr{U}}}
\newcommand{\RR}{\ensuremath{\mathscr{R}}}
\newcommand{\FF}{\ensuremath{\mathscr{F}}}
\newcommand{\Leib}{\ensuremath{\mathsf{Lb}}}
\newcommand{\nLeib}{\ensuremath{{}_{n}\mathsf{Lb}}}
\newcommand{\abnleib}{\ensuremath{\mathsf{ab}_{n}\mathsf{lb}}}
\newcommand{\abnlie}{\ensuremath{\mathsf{ab}_{n}\mathsf{lie}}}
\newcommand{\Vect}{\ensuremath{\mathsf{Vect}}}
\newcommand{\nHL}{\ensuremath{{}_{n}\mathit{HL}}}
\newcommand{\HL}{\ensuremath{\mathit{HL}}}
\newcommand{\sgn}{\ensuremath{\mathrm{sgn}}}
\newcommand{\CExt}{\ensuremath{\mathsf{CExt}}}
\newcommand{\Ext}{\ensuremath{\mathsf{Ext}}}
\newcommand{\Arr}{\ensuremath{\mathsf{Arr}}}
\newcommand{\tensor}{\otimes}
\newcommand{\noproof}{\qed}
\begin{document}

\title[Some results on homology of Leibniz and Lie $n$-algebras]{Some results on homology of\\ Leibniz and Lie $n$-algebras}

\author[J.~M.~Casas]{Jos\'e~Manuel Casas}
\email{jmcasas@uvigo.es}
\address{Departamento de Matem\'atica Aplicada I, Universidad de Vigo, E.U.I.T. Forestal\\
Campus Universitario A Xunqueira, 36005 Pontevedra, Spain}
\author[E.~Khmaladze]{Emzar Khmaladze}
\email{khmal@rmi.acnet.ge}
\address{Department of Algebra, A.~Razmadze Mathematical Institute\\
M.~Alexidze St.~1, 0193 Tbilisi, Georgia}
\address{Departamento de Matem\'atica Aplicada I, Universidad de Vigo, E.U.I.T. Forestal\\
Campus Universitario A Xunqueira, 36005 Pontevedra, Spain}
\author[M.~Ladra]{Manuel Ladra}
\email{manuel.ladra@usc.es}
\address{Departamento de \'Algebra, Universidad de Santiago de Compostela\\
15782 Santiago de Compostela, Spain}
\author[T.~Van~der Linden]{Tim Van~der Linden}
\email{tvdlinde@vub.ac.be}
\address{Centro de Matem\'atica, Universidade de Coimbra\\
3001--454 Coimbra, Portugal}

\thanks{The first three authors' research was supported by Ministerio de Educacion y Ciencia under grant number
MTM2009-14464-C02-02 (includes European FEDER support), by Xunta de Galicia under grant number Incite09 207 215 PR and
by project Ingenio Mathematica (i-MATH) under grant number CSD2006-00032 (Consolider Ingenio 2010).
The fourth author's research was supported by Centro de Matem\'atica da Universidade de Coimbra and by Funda\c c\~ao para a Ci\^encia e a Tecnologia (under grant number SFRH/BPD/38797/2007).}

\subjclass[2010]{17A32, 18E99, 18G10, 18G50.}

\keywords{Semi-abelian category, higher central extension, higher Hopf formula, homology, Leibniz $n$-algebra, Lie $n$-algebra.}

\begin{abstract}
From the viewpoint of semi-abelian homology, some recent results on homology of Leibniz $n$-algebras can be explained categorically. In parallel with these results, we develop an analogous theory for Lie $n$-algebras. We also consider the relative case: homology of Leibniz $n$-algebras relative to the subvariety of Lie $n$-algebras.
\end{abstract}

\maketitle

\section{Introduction}
In his article~\cite{Casas:hwtcolna}, Casas studied a homology theory for Leibniz $n$-algebras~\cite{CLP} based on Leibniz homology~\cite{Loday-Leibniz, LP}. The definition of homology used there exploits the remarkable properties of the so-called \emph{Daletskii functor}
\[
\D_{n-1}\colon{\nLeib\to {}_{2}\Leib=\Leib}
\]
from the category of Leibniz $n$-algebras to the category of Leibniz $2$-algebras, i.e., ordinary Leibniz algebras. Among other results, the author obtained a Hopf formula expressing the second homology of a Leibniz $n$-algebra as a quotient of commutators. Later, using \v Cech derived functors~\cite{Donadze-Inassaridze-Porter}, Casas, Khmaladze and Ladra characterised the higher homology vector spaces in terms of higher Hopf formulae~\cite{CKL}.

Even though Lie $n$-algebras are very close to Leibniz $n$-algebras, it is not easy to extend these results from the Leibniz case to the Lie case, because presently a suitable functor---i.e., a functor with properties similar to $\D_{n-1}$---from the category $\nLie$ of Lie $n$-algebras to the category ${}_{2}\Lie=\Lie$ of Lie algebras is missing.

In this article we study the problem from a different point of view which does allow us to easily carry over results from the Leibniz case to the Lie case. First we reformulate the known results on homology of Leibniz $n$-algebras in terms of categorical Galois theory, and then we can use the same proofs to compute the homology of Lie $n$-algebras.

It was shown in the article~\cite{CKL} that the homology of Leibniz $n$-algebras from~\cite{Casas:hwtcolna} coincides with their Quillen homology. Since this Quillen homology is equivalent to comonadic homology relative to the comonad induced by the forgetful/free adjunction to $\Set$, and since the categories of Leibniz and Lie $n$-algebras are semi-abelian varieties, the theory introduced in~\cite{EGVdL} applies to this situation. Hence the higher Hopf formulae obtained in~\cite{CKL} may be computed using higher-dimensional central extensions~\cite{EGVdL} instead of \v Cech derived functors. In principle, the only difficulty now lies in giving an explicit characterisation of the $m$-fold central extensions of Leibniz $n$-algebras. It turns out, however, that such an explicit characterisation is not hard to find at all. Moreover, this characterisation is easily adapted to work in the case of Lie $n$-algebras.

\subsection*{Structure of the text}
In the following section we recall the basic ideas behind semi-abelian homology with, in particular, the approach based on categorical Galois theory and higher-dimensional central extensions. Section~\ref{Section-Leibniz} treats homology of Leibniz $n$-algebras from this perspective. It contains a characterisation of $m$-fold central extensions of Leibniz $n$-algebras (Propositions~\ref{Proposition-Central-Extension} and~\ref{Proposition-m-Fold-Central-Extension}) and a Galois-theoretic proof of the higher Hopf formulae obtained in~\cite{CKL} (Theorem~\ref{Theorem-Leibniz-Vect}). Next, in Section~\ref{Section-Lie}, an analogous theory is built up for Lie $n$-algebras: we characterise the central extensions (Proposition~\ref{Proposition-m-Fold-Central-Extension-Lie}) and we obtain higher Hopf formulae (Theorem~\ref{Theorem-Lie-Vect}). Section~\ref{Section-Relative} is devoted to the homology theory which arises when the reflector from $\nLeib$ to $\nLie$ is derived. We characterise the central extensions of Leibniz $n$-algebras relative to the subvariety $\nLie$ (Propositions~\ref{Proposition-Relative-Central-Extension} and \ref{Proposition-m-Fold-Central-Extension-Relative}) and obtain a Hopf style formula for the relative homology (Theorem~\ref{Theorem-Leibniz-Lie}). The final Section~\ref{Section-UCE} contains some results on the universal central extensions induced by these three homology theories and on the relations between them: Propositions~\ref{UCE-Leibniz}, \ref{UCE-Lie}, \ref{UCE-Leibniz-Lie} and~\ref{Proposition-UCE-Leibniz-vs-Lie}.

\section{Preliminaries}
We sketch the basic ideas behind the theory of higher central extensions in semi-abelian categories.

\subsection{Semi-abelian categories}
We shall be using methods which were developed in the general framework of semi-abelian categories. Here it suffices to recall that a category is \defn{semi-abelian} when it is pointed, Barr exact and Bourn protomodular with binary coproducts~\cite{Janelidze-Marki-Tholen}. In this context there is a suitable notion of short exact sequence, and the basic homological lemmas such as the Snake Lemma and the $3\times3$~Lemma are valid~\cite{Borceux-Bourn}. Furthermore, homology of simplicial objects is well-behaved, so that Barr and Beck's definition of comonadic homology~\cite{Barr-Beck} may be extended as follows~\cite{EverVdL2}.

We write $\ker f\colon {K[f]\to B}$ for the kernel of a morphism $f\colon{B\to A}$.

\subsection{Comonadic homology}
Let $\A$ be a category and $\B$ a semi-abelian category. Let $I\colon{\A\to \B}$ be a functor and $\G$ a comonad on $\A$. If $A$ is an object of $\A$ and $m\geq 0$ then
\[
H_{m+1}(A,I)_{\G}=H_{m}NI\G A
\]
is the \defn{$(m+1)$-st homology object of $A$ with coefficients in $I$ relative to~$\G$}. Here $N\colon{\S\B\to \Ch\B}$ denotes the \defn{Moore functor}, which maps a simplicial object~$S$ to its normalised chain complex $NS$. It has objects
\[
N_{n} S=\bigcap_{i=0}^{n-1}K[\del_{i}\colon S_{n}\to S_{n-1}]
\]
for $n>0$, $N_{0}S=S_{0}$ and $N_{n}S=0$ for $n<0$, and boundary operators
\[
d_{n}=\del_{n}\comp \bigcap_{i}\ker \del_{i}\colon N_{n} S\to N_{n-1} S
\]
for $n\geq 1$. As shown in~\cite[Theorem~3.6]{EverVdL2}, the  boundary operators $d_{n}$ are always \defn{proper}---their images are kernels---so that computing the homology of $NS$ makes sense.

The simplicial object $\G A$ is part of the simplicial resolution of $A$ induced by the comonad
\[
\G =(G\colon \A \to \A ,\quad \delta\colon G\To G^{2} ,\quad \epsilon\colon G\To 1_{\A})
\]
on $\A$. Putting
\[
\del_{i}=G^{i}\epsilon_{G^{n-i}A}\colon G^{n+1}A\to G^nA, \qquad \sigma_{i}=G^{i}\delta_{G^{n-i}A}\colon G^{n+1}A\to G^{n+2}A ,
\]
for $0\leq i \leq n$, gives the sequence $(G^{n+1}A)_{n\geq 0}$ the structure of a simplicial object~$\G A$ of~$\A$. It has an augmentation $\epsilon_{A}\colon {GA\to A}$; this augmented simplicial object $(\G A,\epsilon_{A} \colon {GA\to A})$ is  the \defn{canonical $\G$-simplicial resolution} of~$A$.

If $\A$ is a semi-abelian variety and $\B$ is a subvariety of $\A$, one may consider the canonical comonad $\G$ on $\A$ induced by the forgetful/free adjunction to $\Set$, the category of sets, and one may take $I$ to be the left adjoint to the inclusion functor. For instance, every variety of $\Omega$-groups (i.e., every variety of universal algebras that has amongst its operations and identities those of the variety of groups and just one constant~\cite{Higgins}) is semi-abelian. Hence a choice of a subvariety here induces a canonical homology theory.

The main result of~\cite{EGVdL} gives an interpretation of such a homology theory  in terms of higher Hopf formulae. The technique used to obtain these Hopf formulae is based on categorical Galois theory, with in particular the theory of higher central extensions. We recall some of the basic concepts; see~\cite{EGVdL} for more details.

\subsection{Higher arrows, higher extensions, higher presentations}
For any integer $m\geq 1$, let $\langle m \rangle$ denote the set $\{1,\dots, m\}$; write $\langle 0\rangle =\varnothing$. The set $2^{\langle m \rangle}$ of all subsets of $\langle m \rangle$, ordered by inclusion, is considered as a category in the usual way: an inclusion $I\subseteq J$ in ${\langle m \rangle}$ corresponds to a map $!^{I}_{J}\colon {I\to J}$ in $2^{\langle m \rangle}$.

Let $\A$ be a semi-abelian category. The functor category $\Fun(2^{\langle m \rangle},\A)$ is denoted $\Arrm\A$. The objects of this category are called \defn{$m$-fold arrows} in~$\A$ (\defn{$m$-cubes} in~\cite{Brown-Ellis, CKL, Donadze-Inassaridze-Porter}). We write $f_I$ for $f(I)$ and $f^I_J$ for $f(!^I_J)$. When, in particular, $I$ is $\varnothing$ and~$J$ is a singleton $\{j \}$, we write $f_{j}=f^{\varnothing}_{\{j \}}$.

Any $(m+1)$-fold arrow $f$ may be considered as a morphism between $m$-fold arrows as follows: if $B$ denotes the restriction of $f$ to $\langle m \rangle$ and $A$ is its restriction to ${\{I\subset \langle m+1 \rangle\mid m+1\in I\}}$, then $f$ is a natural transformation from $B$ to $A$, i.e., a morphism $f\colon{B\to A}$ in $\Arrm\A$.

An $m$-fold arrow $f$ is called an \defn{extension} (\defn{exact $m$-cube} in~\cite{Brown-Ellis, CKL, Donadze-Inassaridze-Porter}) if, for all $I\subsetneq \langle m \rangle$, the induced morphism ${f_{I}\to\lim_{J\supset I} f_{J}}$ is a regular epimorphism. This notion of extension is easily seen to coincide with the one defined inductively in~\cite{EGVdL}. In particular, a one-fold extension (usually just called \defn{extension}) is a regular epimorphism in $\A$, and a two-fold extension (usually called a \defn{double extension}) is a pushout square. The full subcategory of $\Arrm\A$ determined by the $m$-fold extensions is denoted $\Extm\A$. When $m\geq 1$ this category is generally no longer semi-abelian, so concepts such as exact sequences, etc.\ will be considered in the semi-abelian category~$\Arrm\A$ instead. We further write $\Ext\A=\Ext^{1}\!\A$ and ${\Arr\A=\Arr^{1}\!\A}$.

Given an object $A$ of $\A$, an $m$-extension $f$ is said to be an \defn{$m$-fold presentation of $A$} (a \defn{free exact $m$-presentation of $A$} in~\cite{Brown-Ellis, CKL, Donadze-Inassaridze-Porter}) when $f_{\langle m \rangle}=A$ and $f_{I}$ is projective for all $I\subsetneq \langle m \rangle$. In the varietal case, canonical presentations may be obtained through truncations of canonical simplicial resolutions.

\subsection{Higher central extensions}
A \defn{Birkhoff subcategory}~\cite{Janelidze-Kelly} of a semi-abelian category is a full and reflective subcategory which is closed under subobjects and regular quotients. For instance, a Birkhoff subcategory of a variety of universal algebras is the same thing as a subvariety. Given a semi-abelian category $\A$ and a Birkhoff subcategory $\B$ of $\A$, we denote the induced adjunction
\begin{equation}\label{Birkhoff-Adjunction}
\xymatrix@1{{\A} \ar@<1ex>[r]^-{I} \ar@{}[r]|-{\perp} & {\B.} \ar@<1ex>[l]^-{\supset}}
\end{equation}
Together with the classes of extensions in $\A$ and $\B$, this adjunction forms a \emph{Galois structure} in the sense of Janelidze (\cite{Janelidze:Pure}; see also~\cite{Borceux-Janelidze}). The coverings with respect to this Galois structure are the \defn{central extensions} introduced in~\cite{Janelidze-Kelly}. (See Section~\ref{Dimension-one} below for an explicit definition.) These central extensions in turn determine a reflective subcategory $\CExt_{\B}\A$ of $\Ext\A$, which together with the appropriate classes of double extensions again forms a Galois structure. The coverings with respect to this Galois structure are called \defn{double central extensions}. This process may be repeated \emph{ad infinitum}, on each level inducing an adjunction
\begin{equation}\label{Birkhoff-Adjunction-High}
\xymatrix@1{{\Extm\A} \ar@<1ex>[r]^-{I_{m}} \ar@{}[r]|-{\perp} & {\CExtm\A} \ar@<1ex>[l]^-{\supset}}
\end{equation}
between the $m$-fold extensions in $\A$ and the \defn{$m$-fold central extensions} in~$\A$. In order to understand the higher Hopf formulae, we need an explicit description of those higher central extensions. We now sketch how, in general, the functors $I_{m}$ work.

\subsection{Dimension zero}
For every object $A$ of $\A$, the adjunction~\eqref{Birkhoff-Adjunction} induces a short exact sequence
\[
\xymatrix{0 \ar[r] & [A]_{\B} \ar[r]^-{\mu_{A}} & A \ar[r]^-{\eta_{A}} & IA \ar[r] & 0.}
\]
Here the object $[A]_{\B}$, defined as the kernel of $\eta_{A}$, acts as a zero-dimensional commutator relative to $\B$. Of course, $IA=A/[A]_{\B}$, so that $A$ is an object of~$\B$ if and only if $[A]_{\B}$ is zero. We shall also write $L_{0}[A]$ for this object~$[A]_{\B}$.

\subsection{Dimension one}\label{Dimension-one}
An extension $f\colon{B\to A}$ in $\A$ is central with respect to $\B$ or \defn{$\B$-central} if and only if the restrictions $[f_{0}]_{\B}$, $[f_{1}]_{\B}\colon{[R[f]]_{\B}\to [B]_{\B}}$ of the kernel pair projections $f_{0}$, $f_{1}\colon{R[f]\to B}$ coincide. This is the case precisely when $[f_{0}]_{\B}$ and $[f_{1}]_{\B}$ are isomorphisms, or, equivalently, when the kernel
\[
\ker[f_{0}]_{\B}\colon{L_{1}[f]\to [R[f]]_{\B}}
\]
of $[f_{0}]_{\B}$ is zero.
\[
\xymatrix{& L_{1}[f] \ar[d]_-{\ker [f_{0}]_{\B}} \ar@{.>}[ddr] \\
0 \ar[r] & [R[f]]_{\B} \ar[r]^-{\mu_{R[f]}} \ar@<-.5ex>[d]_-{[f_{0}]_{\B}} \ar@<.5ex>[d]^-{[f_{1}]_{\B}} & R[f] \ar@<-.5ex>[d]_-{f_{0}} \ar@<.5ex>[d]^-{f_{1}} \ar[r]^-{\eta_{R[f]}} & I R[f] \ar@<-.5ex>[d]_-{I f_{0}} \ar@<.5ex>[d]^-{I f_{1}} \ar[r] & 0\\
0 \ar[r] & [B]_{\B} \ar[r]_-{\mu_{B}} & B \ar[r]_-{\eta_{B}} & IB \ar[r] & 0}
\]
Through the composite $f_{1}\comp\mu_{R[f]}\comp\ker[f_{0}]_{\B}$ the object $L_{1}[f]$ may be considered as a normal subobject of $B$. It acts as a one-dimensional commutator relative to $\B$ and, if $K$ denotes the kernel of $f$, it is usually written $[K,B]_{\B}$. One computes the \defn{centralisation} $I_{1}f$ of $f$, i.e., its reflection into the subcategory $\CExt_{\B}\A$ of $\Ext\A$, by dividing out this commutator. This yields a morphism of short exact sequences
\[
\xymatrix{0 \ar[r] & L_{1}[f] \ar[r] \ar[d]_{[f]_{\CExt_{\B}\A}} & B \ar[r] \ar[d]_-{f} & \tfrac{B}{[K,B]_{\B}} \ar[r] \ar[d]^-{I_{1}f} & 0\\
& 0 \ar[r] & A \ar[r] & A \ar[r] & 0}
\]
in $\A$ which may be considered as a short exact sequence
\[
\xymatrix{0 \ar[r] & [f]_{\CExt_{\B}\A} \ar[r]^-{\mu^{1}_{f}} & f \ar[r]^-{\eta^{1}_{f}} & I_{1}f \ar[r] & 0}
\]
in $\Ext\A$. A crucial point here is that the extension $[f]_{\CExt_{\B}\A}$, which is to be divided out of the extension $f$ to obtain $I_{1}f$, is completely determined by an object $L_{1}[f]$ in~$\A$. This remains true in all higher dimensions.

\subsection{Higher dimensions}
For any $m\geq 1$, it may be shown that the object $[f]_{\CExtm\A}$ in the short exact sequence
\[
\xymatrix{0 \ar[r] & [f]_{\CExtm\A} \ar[r]^-{\mu^{m}_{f}} & f \ar[r]^-{\eta^{m}_{f}} & I_{m}f \ar[r] & 0}
\]
induced by the centralisation of an $m$-fold extension $f$ via the adjunction~\eqref{Birkhoff-Adjunction-High} is zero everywhere except in its ``top object'' $L_{m}[f]=([f]_{\CExtm\A})_{\varnothing}$. In parallel with the case $m=1$, this object $L_{m}[f]$ of $\A$ acts as an $m$-dimensional commutator which may be computed as the kernel of the restriction of $(f_{0})_{\varnothing}\colon{R[f]_{\varnothing}\to B_{\varnothing}}$ to a morphism  ${L_{m-1}[R[f]]\to L_{m-1}[B]}$. Likewise, an $m$-fold extension $f$ is central if and only if the induced morphisms
\[
(f_{0})_{\varnothing}, (f_{1})_{\varnothing}\colon{L_{m-1}[R[f]]\to L_{m-1}[B]}
\]
of $\A$ coincide.

\section{The case of Leibniz $n$-algebras}\label{Section-Leibniz}
In this section we recall the definition of Leibniz $n$-algebras and some of their basic properties. We characterise the $m$-fold central extensions of Leibniz $n$-algebras (Propositions~\ref{Proposition-Central-Extension} and~\ref{Proposition-m-Fold-Central-Extension}) and use this characterisation to give a Galois-theoretic proof of the higher Hopf formulae obtained in~\cite{CKL} (Theorem~\ref{Theorem-Leibniz-Vect}).

\subsection{The category $\nLeib$}
Throughout the text, we fix a natural number $n\geq 2$ and a ground field $\K$. Recall that a \defn{Leibniz $n$-algebra}~\cite{CLP} consists of a $\K$-vector space~$\LL$ with an additional $n$-ary operation $[-,\dots,-]$ that is $n$-linear and satisfies the fundamental identity
\begin{equation}\label{Leibniz-Identity}
[[l_{1},\dots,l_{n}],l'_{1},\dots,l'_{n-1}]=\sum_{1\leq i\leq n}[l_{1},\dots,l_{i-1},[l_{i},l'_{1},\dots,l'_{n-1}],l_{i+1},\dots,l_{n}],
\end{equation}
for all $l_{1}$, \dots, $l_{n}$, $l'_{1}$, \dots, $l'_{n-1}\in \LL$. By $n$-linearity, $[-,\dots,-]$ induces a linear map $\LL^{\tensor n}\to \LL$ usually called the \defn{bracket} of~$\LL$.

Since the underlying vector space $\LL$ is, in particular, an (abelian) group, the category $\nLeib$ of Leibniz $n$-algebras is a variety of $\Omega$-groups and as such, Leibniz $n$-algebras form a semi-abelian category with enough projectives. The notions of ($n$-sided) ideal, quotient, etc.\ considered in~\cite{Casas:hwtcolna} coincide with the categorical notions of normal subobject, cokernel, etc.

\subsection{The adjunction to vector spaces}
Any $\K$-vector space $\LL$ may be considered as a Leibniz $n$-algebra by equipping it with the trivial bracket: $[l_{1},\dots,l_{n}]=0$ for $l_{1}$, \dots, $l_{n}\in \LL$. The image of this inclusion ${\Vect\to \nLeib}$ consists precisely of the \defn{abelian} Leibniz $n$-algebras: those that admit an internal abelian group structure. Proving this fact is not difficult and analogous to the case of rings~\cite[Example 1.4.10]{Borceux-Bourn}. Hence the left adjoint $\abnleib\colon{\nLeib\to \Vect}$ to this inclusion is the abelianisation functor; it may be described as follows.

Given $n$ ideals $\NN_{1}$, \dots, $\NN_{n}$ of a Leibniz $n$-algebra $\LL$, we shall write $[\NN_{1},\dots,\NN_{n}]$ for the \defn{commutator ideal} of $\LL$ generated by the elements $[l_{1},\dots,l_{n}]$, where either $(l_{1},\dots, l_{n})$ or any of its permutations is in $\NN_{1}\times\dots\times \NN_{n}$. (In case $\NN_{2}=\dots=\NN_{n}=\LL$, the subspace of $\LL$ generated by those elements is automatically an ideal.) Clearly, a Leibniz $n$-algebra $\LL$ is abelian if and only if $[\LL,\dots,\LL]$ is zero; hence, for any $\LL$ in $\nLeib$, its reflection $\abnleib (\LL)$ into $\Vect$ is $\LL/[\LL,\dots,\LL]$.

\subsection{Central extensions}
The central extensions induced by this adjunction are the ones we expect them to be, i.e., the ones introduced in~\cite{Casas:oceolna}:

\begin{proposition}\label{Proposition-Central-Extension}
In $\nLeib$, an extension $f\colon{\BB\to \AA}$ with kernel $\KK$ is central if and only if the ideal $[\KK,\BB\dots,\BB]$ is zero. Hence, in any case, the object $L_{1}[f]$ is equal to this ideal of $\BB$.
\end{proposition}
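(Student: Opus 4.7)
The plan is to prove the second, stronger assertion---that $L_1[f]$, viewed as a normal subobject of $\BB$, coincides with $[\KK,\BB,\dots,\BB]$---since the centrality criterion follows immediately from this by taking $L_1[f]=0$.

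First I would unfold the kernel pair. In any semi-abelian variety of $\Omega$-groups, $R[f]$ is just
\[
R[f]=\{(b_{1},b_{2})\in\BB\oplus\BB\mid f(b_{1})=f(b_{2})\},
\]
with $f_{0}$, $f_{1}$ the two projections, and the diagonal $\Delta\colon\BB\to R[f]$, $b\mapsto(b,b)$, splits both of them. Every element of $R[f]$ can be written uniquely as $(b_{2}+k,b_{2})$ with $k=b_{1}-b_{2}\in\KK$. Restricting to the commutator, $[R[f]]_{\B}=[R[f],\dots,R[f]]$ is the $\K$-vector subspace of $R[f]$ spanned by all $n$-fold brackets (which, by the remark on $\NN_{i}=R[f]$, is already an ideal).

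The main computation is an $n$-linearity expansion. For a generator $[(b_{1}^{1},b_{2}^{1}),\dots,(b_{1}^{n},b_{2}^{n})]$ of $[R[f]]_{\B}$, set $k_{i}=b_{1}^{i}-b_{2}^{i}\in\KK$. Then
\[
f_{0}\bigl([(b_{1}^{1},b_{2}^{1}),\dots]\bigr)-f_{1}\bigl([(b_{1}^{1},b_{2}^{1}),\dots]\bigr)
=[b_{2}^{1}+k_{1},\dots,b_{2}^{n}+k_{n}]-[b_{2}^{1},\dots,b_{2}^{n}]
\]
expands, by $n$-linearity, as a sum of brackets each of which has at least one entry $k_{i}\in\KK$ and the remaining entries in $\BB$; every such summand lies in $[\KK,\BB,\dots,\BB]$. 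Hence $(f_{0}-f_{1})([R[f]]_{\B})\subseteq[\KK,\BB,\dots,\BB]$. For the reverse containment I would use the explicit witnesses: for $k\in\KK$ and $b_{2},\dots,b_{n}\in\BB$, the element
\[
x=\bigl[(0,k),(b_{2},b_{2}),\dots,(b_{n},b_{n})\bigr]
\]
lies in $[R[f]]_{\B}$, satisfies $f_{0}(x)=0$ (a bracket with a zero entry) and $f_{1}(x)=[k,b_{2},\dots,b_{n}]$; the same trick in any other slot (using the invariance of $[\KK,\BB,\dots,\BB]$ under permutations recorded in the definition) exhausts a spanning set of $[\KK,\BB,\dots,\BB]$.

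Combining these two observations: an element $x\in[R[f]]_{\B}$ belongs to $L_{1}[f]=\ker[f_{0}]_{\B}$ exactly when $f_{0}(x)=0$, in which case the previous paragraph gives $f_{1}(x)=-\bigl(f_{0}(x)-f_{1}(x)\bigr)\in[\KK,\BB,\dots,\BB]$; conversely the explicit witnesses show every generator of $[\KK,\BB,\dots,\BB]$ is hit. So the image of $L_{1}[f]$ under $f_{1}\comp\mu_{R[f]}\comp\ker[f_{0}]_{\B}$ is exactly $[\KK,\BB,\dots,\BB]$, and the centrality characterisation follows by setting this subobject equal to zero. The only slightly delicate point is the bookkeeping with permutations in the definition of $[\KK,\BB,\dots,\BB]$, but it is handled by running the witness construction with $(0,k)$ placed in each of the $n$ slots in turn; the rest is straightforward $n$-linearity.
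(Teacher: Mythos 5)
Your proof is correct and rests on the same two computations as the paper's: the $n$-linearity expansion of a bracket of kernel-pair elements (which shows $f_{0}$ and $f_{1}$ agree on $[R[f],\dots,R[f]]$ modulo $[\KK,\BB,\dots,\BB]$), and the witnesses with a zero placed in one slot (which produce every generator of $[\KK,\BB,\dots,\BB]$). The only difference is organisational---you compute the subobject $f_{1}(L_{1}[f])$ of $\BB$ directly and read off the centrality criterion as the special case $L_{1}[f]=0$, which has the added merit of establishing the proposition's final sentence explicitly rather than leaving it to the general theory.
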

\begin{proof}
By definition, the extension $f$ is central if and only if the restrictions of the kernel pair projections $f_{0}$, $f_{1}$ to the brackets
\[
[R[f],\dots,R[f]]\to [\BB,\dots,\BB]
\]
coincide.

This latter condition implies
\begin{align*}
[b_{1},\dots,b_{n}]&=f_{0}[(b_{1},b_{1}),\dots, (b_{i-1},b_{i-1}), (b_{i},0), (b_{i+1},b_{i+1}),\dots,(b_{n},b_{n})]\\
&=f_{1}[(b_{1},b_{1}),\dots, (b_{i-1},b_{i-1}), (b_{i},0), (b_{i+1},b_{i+1}),\dots,(b_{n},b_{n})]\\
&=[b_{1},\dots,b_{i-1},0,b_{i+1},\dots,b_{n}]=0
\end{align*}
for any $b_{1}$, \dots, $b_{n}\in \BB$ with $b_{i}\in \KK$. Hence when $f$ is central $[\KK,\BB,\dots,\BB]$ is zero.

Now let $[(b_{1},b_{1}+k_{1}),\dots, (b_{n},b_{n}+k_{n})]$ be a generator of $[R[f],\dots,R[f]]$; here $b_{1}$, \dots, $b_{n}\in \BB$ and $k_{1}$, \dots, $k_{n}\in \KK$. Then
\[
f_{0}[(b_{1},b_{1}+k_{1}),\dots, (b_{n},b_{n}+k_{n})]=[b_{1},\dots,b_{n}],
\]
while also
\[
f_{1}[(b_{1},b_{1}+k_{1}),\dots, (b_{n},b_{n}+k_{n})]=[b_{1}+k_{1},\dots, b_{n}+k_{n}]=[b_{1},\dots,b_{n}]
\]
since the bracket is $n$-linear and $[\KK,\BB,\dots,\BB]$ is zero. This implies that $f$ is a central extension.
\end{proof}

\subsection{Higher central extensions}
Using essentially the same proof as in~\ref{Proposition-Central-Extension}, this result may be extended to higher dimensions as follows.

\begin{proposition}\label{Proposition-m-Fold-Central-Extension}
Consider $m\geq 1$. An $m$-fold extension $f\colon{\BB\to \AA}$ in $\nLeib$ is central if and only if the object
\begin{equation}\label{Object-L}
\sum_{I_{1}\cup\dots\cup I_{n}=\langle m \rangle}\Bigl[\bigcap_{i\in I_{1}}K[f_{i}],\dots, \bigcap_{i\in I_{n}}K[f_{i}]\Bigr]
\end{equation}
is zero. Hence, in any case, it is equal to $L_{m}[f]$.
\end{proposition}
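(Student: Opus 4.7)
The plan is to proceed by induction on $m$, exploiting the recursive characterization of $L_m[f]$ recalled in the preliminaries: viewing $f$ as a morphism $B\to A$ in $\Ext^{m-1}\nLeib$, the object $L_m[f]$ is the kernel of the restriction of the first kernel pair projection $(f_0)_\varnothing\colon R[f]_\varnothing\to B_\varnothing$ to a map $L_{m-1}[R[f]]\to L_{m-1}[B]$. For the base case $m=1$, using the convention $\bigcap_{i\in\varnothing}K[f_i]=\BB$, the sum \eqref{Object-L} ranges over tuples $(I_1,\dots,I_n)\in\{\varnothing,\{1\}\}^n$ with at least one $I_j=\{1\}$. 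Each summand is then a commutator ideal with some slots equal to $K[f_1]=\KK$ and the rest equal to $\BB$; the permutation-symmetry built into the definition of $[\NN_1,\dots,\NN_n]$ together with $\KK\subseteq\BB$ makes every such ideal contained in $[\KK,\BB,\dots,\BB]$, and this value is attained, so the sum is exactly $L_1[f]$ by Proposition~\ref{Proposition-Central-Extension}.

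For the inductive step, compute the kernel pair $R[f]$ pointwise in $\Ext^{m-1}\nLeib$. A short unpacking shows that for every $J\subseteq\langle m-1\rangle$,
\[
\bigcap_{i\in J}K[R[f]_i]=\bigl\{(x,x+k)\in R[f]_\varnothing : x\in\bigcap_{i\in J}K[f_i],\; k\in\bigcap_{i\in J}K[f_i]\cap K[f_m]\bigr\}.
\]
By the induction hypothesis, $L_{m-1}[R[f]]$ and $L_{m-1}[B]$ are sums of brackets indexed by covers $J_1\cup\dots\cup J_n=\langle m-1\rangle$. Now decompose each generator $(x_j,x_j+k_j)=(x_j,x_j)+(0,k_j)$ and expand a generator bracket by $n$-linearity into $2^n$ sub-brackets, indexed by the subset $S\subseteq\langle n\rangle$ of slots in which the ``kernel part'' $(0,k_j)$ is selected; each sub-bracket still lies in the same $J_\bullet$ summand of $L_{m-1}[R[f]]$. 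The unique sub-bracket with $S=\varnothing$ is diagonal, namely $([x_1,\dots,x_n],[x_1,\dots,x_n])\in\Delta L_{m-1}[B]$, whereas every sub-bracket with $S\neq\varnothing$ has a zero slot, hence vanishing first projection, and sits in $\{0\}\times K[f_m]$. Hence $L_{m-1}[R[f]]=\Delta L_{m-1}[B]\oplus L_m[f]$, and $L_m[f]$, viewed inside $K[f_m]\subseteq\BB$ via $(0,k)\mapsto k$, is generated precisely by the sub-brackets with $S\neq\varnothing$.

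The combinatorial bijection $(J_\bullet,S)\leftrightarrow(I_\bullet)$ defined by $I_j=J_j\cup\{m\}$ for $j\in S$ and $I_j=J_j$ otherwise matches pairs of a cover $J_1\cup\dots\cup J_n=\langle m-1\rangle$ and a nonempty $S\subseteq\langle n\rangle$ with covers $I_1\cup\dots\cup I_n=\langle m\rangle$; under this bijection the $j$-th slot of an $S\neq\varnothing$ sub-bracket ranges exactly over $\bigcap_{i\in I_j}K[f_i]$, so $L_m[f]$ coincides with \eqref{Object-L}, closing the induction. The main obstacle is the bookkeeping: establishing the pointwise formula for $\bigcap_{i\in J}K[R[f]_i]$ by tracking the kernel pair structure in $\Ext^{m-1}\nLeib$, and verifying that the $n$-linear decomposition produces an actual direct sum splitting $L_{m-1}[R[f]]=\Delta L_{m-1}[B]\oplus L_m[f]$ rather than a mere spanning decomposition.
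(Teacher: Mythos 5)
Your argument is correct, and it organises the induction step differently from the paper. The paper also inducts on $m$ and uses the induction hypothesis to identify $L_{m-1}[R[f]]$ and $L_{m-1}[B]$ with the corresponding sums of commutator ideals, but it then invokes the criterion that $f$ is central if and only if the two restrictions $(f_0)_\varnothing,(f_1)_\varnothing\colon {L_{m-1}[R[f]]\to L_{m-1}[B]}$ coincide, and verifies both implications on generators: lifting a generator $[k_1,\dots,k_n]$ of~\eqref{Object-L} to $[(k_1,k_1),\dots,(k_j,0),\dots,(k_n,k_n)]$ in one direction, and telescoping $[b_1+k_1,\dots,b_n+k_n]$ down to $[b_1,\dots,b_n]$ in the other. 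You instead compute $L_m[f]$ itself as the kernel of the restriction of $(f_0)_\varnothing$, via the splitting of $L_{m-1}[R[f]]$ into $\Delta L_{m-1}[B]$ and $\{0\}\times E$, where $E$ denotes the object~\eqref{Object-L}; your $2^n$-term multilinear expansion is the un-telescoped form of the paper's converse computation, and your cover bijection $(J_\bullet,S)\leftrightarrow I_\bullet$ is implicit in the paper's placement of the index $m$. What your packaging buys is an actual proof of the closing clause ``hence, in any case, it is equal to $L_m[f]$'', which the paper's proof leaves tacit (an ``iff zero'' statement alone does not pin down the object); it also makes the equivalence between ``the two restrictions coincide'' and ``$E=0$'' transparent, since on $\Delta b+(0,e)$ the two projections return $b$ and $b+e$. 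Two small points to tidy. First, the summands of~\eqref{Object-L} and of $L_{m-1}[\,\cdot\,]$ are \emph{ideals} generated by brackets, not merely their linear spans, so arguing on generators requires the remark (which the paper also only gestures at) that the fundamental identity~\eqref{Leibniz-Identity} and the permutation clause in the definition of $[\NN_1,\dots,\NN_n]$ allow outer bracketings to be absorbed back into the sum, so that the sum of spans of generators is already the sum of ideals. Second, the directness of your splitting, while true ($\Delta b=(0,e)$ forces $b=0$), is not actually needed: only the spanning statement and the inclusion of $\{0\}\times E$ in the kernel of $(f_0)_\varnothing$ enter the kernel computation.
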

\begin{proof}
We give a proof by induction on $m$. Since the case $m=1$ was considered in Proposition~\ref{Proposition-Central-Extension}, we may suppose that the result holds for $m-1$. Hence the $m$-extension $f$ is central if and only if the restrictions of the kernel pair projections $f_{0}$, $f_{1}$ to morphisms
\[
\sum_{\stackrel{I_{1}\cup\dots\cup I_{n}}{=\langle m-1 \rangle}}\Bigl[\bigcap_{i\in I_{1}}K[R[f]_{i}],\dots, \bigcap_{i\in I_{n}}K[R[f]_{i}]\Bigr]\to \sum_{\stackrel{I_{1}\cup\dots\cup I_{n}}{=\langle m-1 \rangle}}\Bigl[\bigcap_{i\in I_{1}}K[\BB_{i}],\dots, \bigcap_{i\in I_{n}}K[\BB_{i}]\Bigr]
\]
coincide. 

In what follows, the arguments given for generators may easily be extended to general elements.

Suppose that the latter condition holds, let $I_{1}\cup\dots\cup I_{n}$ be a partition of $\langle m \rangle$ and consider $[k_{1},\dots, k_{n}]$ in
\[
\Bigl[\bigcap_{i\in I_{1}}K[f_{i}],\dots, \bigcap_{i\in I_{n}}K[f_{i}]\Bigr]\subset f_{\varnothing}.
\]
Suppose that $m\in I_{j}$. Then $k_{j}\in K[f_{m}]$, so that
\[
[(k_{1},k_{1}),\dots, (k_{j-1},k_{j-1}),(k_{j},0),(k_{j+1},k_{j+1}),\dots,(k_{n},k_{n})]
\]
is in $R[f]_{\varnothing}$; in fact, it is easily seen to be an element of
\[
\Bigl[\bigcap_{i\in I_{1}}K[R[f]_{i}],\dots,\bigcap_{i\in I_{j}\setminus \{m\}}K[R[f]_{i}],\dots, \bigcap_{i\in I_{n}}K[R[f]_{i}]\Bigr].
\]
Hence
\begin{align*}
[k_{1},\dots, k_{n}]&=(f_{0})_{\varnothing}[(k_{1},k_{1}),\dots, (k_{j-1},k_{j-1}),(k_{j},0),(k_{j+1},k_{j+1}),\dots,(k_{n},k_{n})]\\
&=(f_{1})_{\varnothing}[(k_{1},k_{1}),\dots, (k_{j-1},k_{j-1}),(k_{j},0),(k_{j+1},k_{j+1}),\dots,(k_{n},k_{n})]\\
&=[k_{1},\dots,k_{j-1},0,k_{j},\dots, k_{n}]=0,
\end{align*}
which proves that~\eqref{Object-L} is zero.

Conversely, suppose that~\eqref{Object-L} is zero, let $I_{1}\cup\dots\cup I_{n}$ be a partition of $\langle m-1 \rangle$ and consider $[(b_{1},b_{1}+k_{1}),\dots, (b_{n},b_{n}+k_{n})]$ in
\[
\Bigl[\bigcap_{i\in I_{1}}K[R[f]_{i}],\dots, \bigcap_{i\in I_{n}}K[R[f]_{i}]\Bigr];
\]
here $b_{1}$, \dots, $b_{n}\in \BB_{\varnothing}$ and $k_{1}$, \dots, $k_{n}\in K[f_{m}]$. Now
\[
(f_{0})_{\varnothing}[(b_{1},b_{1}+k_{1}),\dots, (b_{n},b_{n}+k_{n})]=[b_{1},\dots,b_{n}],
\]
while also
\begin{align*}
&(f_{1})_{\varnothing}[(b_{1},b_{1}+k_{1}),\dots, (b_{n},b_{n}+k_{n})]=[b_{1}+k_{1},\dots, b_{n}+k_{n}]\\
&=[b_{1},b_{2}+k_{2},\dots, b_{n}+k_{n}]+[k_{1},b_{2}+k_{2},\dots, b_{n}+k_{n}]\\
&=[b_{1},b_{2},b_{3}+k_{3},\dots, b_{n}+k_{n}]+[b_{1},k_{2},b_{3}+k_{3},\dots, b_{n}+k_{n}]\\
&=\cdots=[b_{1},\dots,b_{n}]
\end{align*}
since~\eqref{Object-L} is zero.
\end{proof}

\subsection{Homology}
In his article~\cite{Casas:hwtcolna}, Casas studied a homology theory for Leibniz $n$-algebras based on Leibniz homology~\cite{Loday-Leibniz, LP}. The latter homology theory is extended to Leibniz $n$-algebras via the \defn{Daletskii functor}~\cite{Daletskii}
\[
\D_{n-1}\colon{\nLeib\to {}_{2}\Leib=\Leib}
\]
which takes a Leibniz $n$-algebra $\LL$ and maps it to the Leibniz algebra $\D_{n-1}(\LL)$ with underlying vector space $\LL^{\tensor (n-1)}$ and bracket
\[
[l_{1}\tensor\cdots\tensor l_{n-1},l'_{1}\tensor\cdots\tensor l'_{n-1}]=\sum_{1\leq i\leq n-1}l_{1}\tensor\cdots\tensor[l_{i},l'_{1},\dots, l'_{n-1}]\tensor\cdots\tensor l_{n-1}.
\]
It is explained in~\cite{Casas:hwtcolna} that the underlying vector space of a Leibniz $n$-algebra $\LL$ always carries a structure of $\D_{n-1}(\LL)$-corepresentation. By definition, the $m$-th homology of~$\LL$ is
\[
\nHL_{m}(\LL)=\HL_{m}(\D_{n-1}(\LL),\LL),
\]
i.e., the homology of the associated Leibniz algebra $\D_{n-1}(\LL)$ with coefficients in the $\D_{n-1}(\LL)$-corepresentation $\LL$.

Among other results, in~\cite{Casas:hwtcolna} a Hopf formula expressing the first homology of a Leibniz $n$-algebra as a quotient of commutators is obtained. (It is the \emph{first} homology rather than the \emph{second} homology due to a dimension shift caused by this particular definition.) Later Casas, Khmaladze and Ladra also characterised the higher homology vector spaces in terms of Hopf formulae~\cite{CKL}.

In the latter article it is also shown that this concept of homology for Leibniz $n$-algebras coincides with the Quillen homology \cite[Theorem~4]{CKL}. On the other hand, the category $\nLeib$, being a semi-abelian variety, admits a canonical homology theory, namely the comonadic homology with coefficients in the abelianisation functor, relative to the canonical comonad $\G$ on $\nLeib$. Since both may be expressed as Quillen homology, up to a dimension shift the two homology theories coincide:
\[
H_{m+1}(\LL,\abnleib)_{\G}\cong\nHL_{m}(\LL),
\]
for all $\LL$ in $\nLeib$ and $m\geq 0$. As such, the standard techniques of semi-abelian homology are available, and Proposition~\ref{Proposition-m-Fold-Central-Extension} provides us with an alternative proof for the next theorem.

\begin{theorem}[Hopf type formula, Theorem 17 in \cite{CKL}]\label{Theorem-Leibniz-Vect}
Consider $m\geq 1$. If $f$ is an $m$-fold presentation of a Leibniz $n$-algebra $\LL$, then
\[
H_{m+1}(\LL,\abnleib)_{\G}\cong\frac{[f_{\varnothing},\dots,f_{\varnothing}]\cap \bigcap_{i\in \langle m\rangle}K[f_{i}]}{\sum_{I_{1}\cup\dots\cup I_{n}=\langle m \rangle}\Bigl[\bigcap_{i\in I_{1}}K[f_{i}],\dots, \bigcap_{i\in I_{n}}K[f_{i}]\Bigr]}.
\]
\end{theorem}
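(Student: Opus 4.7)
The plan is to invoke the general higher Hopf formula of~\cite{EGVdL} in the semi-abelian setting, and then identify the two resulting commutator ingredients concretely using the material already developed. All hypotheses are in place: $\nLeib$ is a semi-abelian variety, $\Vect\hookrightarrow\nLeib$ (the subvariety of abelian objects) is a Birkhoff subcategory with reflector $\abnleib$, and the canonical comonad $\G$ comes from the forgetful/free adjunction to~$\Set$. Applied to any $m$-fold presentation $f$ of $\LL$, the main theorem of~\cite{EGVdL} yields a natural isomorphism
\[
H_{m+1}(\LL,\abnleib)_{\G}\;\cong\;\frac{L_0[f_{\varnothing}]\cap \bigcap_{i\in \langle m\rangle}K[f_{i}]}{L_m[f]}.
\]

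The remaining task is to rewrite $L_0[f_{\varnothing}]$ and $L_m[f]$ in Leibniz-theoretic terms. For the numerator, the zero-dimensional commutator is by definition the kernel of the reflection unit $f_{\varnothing}\to\abnleib(f_{\varnothing})=f_{\varnothing}/[f_{\varnothing},\dots,f_{\varnothing}]$, so that $L_0[f_{\varnothing}]=[f_{\varnothing},\dots,f_{\varnothing}]$, which produces exactly the numerator of the stated formula. For the denominator, Proposition~\ref{Proposition-m-Fold-Central-Extension} already identifies
\[
L_m[f]\;=\;\sum_{I_{1}\cup\dots\cup I_{n}=\langle m \rangle}\Bigl[\bigcap_{i\in I_{1}}K[f_{i}],\dots, \bigcap_{i\in I_{n}}K[f_{i}]\Bigr].
\]
Substituting these two expressions into the abstract Hopf formula completes the argument.

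The only real obstacle is conceptual rather than computational: one must be confident that the object written $L_m[f]$ in the abstract formula of~\cite{EGVdL} really is the Galois-theoretic commutator computed in Proposition~\ref{Proposition-m-Fold-Central-Extension}. This is precisely what the recursive definition recalled in the preliminaries provides, so no additional Leibniz-specific work is required beyond that proposition and the description of $\abnleib$. As a sanity check, the resulting vector space may be compared with $\nHL_m(\LL)$ via the dimension-shifted identification $H_{m+1}(\LL,\abnleib)_{\G}\cong\nHL_m(\LL)$ recorded above, which is the form in which the same result was originally stated in~\cite{CKL}.
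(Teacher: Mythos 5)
Your proposal is correct and follows exactly the paper's own (one-line) proof: combine the general higher Hopf formula of~\cite[Theorem~8.1]{EGVdL} with the identification of $L_{m}[f]$ from Proposition~\ref{Proposition-m-Fold-Central-Extension}, the numerator being handled by the description of $\abnleib$ as quotienting by $[\LL,\dots,\LL]$. You simply spell out the substitution that the paper leaves implicit.
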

\begin{proof}
This is a combination of Proposition~\ref{Proposition-m-Fold-Central-Extension} with~\cite[Theorem~8.1]{EGVdL}.
\end{proof}

\section{The case of Lie $n$-algebras}~\label{Section-Lie}
The theory for Lie $n$-algebras is almost literally the same as in the Leibniz $n$-algebra case. Before stating the main results, let us first recall some basic definitions. In order to recover the case of Lie algebras for $n=2$, we assume henceforward that the characteristic of the ground field $\K$ is not equal to $2$.

A \defn{Lie $n$-algebra}~\cite{Filippov} is a skew-symmetric Leibniz $n$-algebra, i.e., it is a vector space $\LL$ equipped with a linear map $[-,\dots,-]\colon{\LL^{\tensor n}\to \LL}$ that satisfies the identity~\eqref{Leibniz-Identity} as well as the further identity
\[
[l_{1},\dots,l_{n}]=\sgn(\sigma)[l_{\sigma(1)},\dots,l_{\sigma(n)}]
\]
called \defn{skew symmetry}. Here $l_{1}$, \dots, $l_{n}$ are elements of $\LL$, $\sigma \in S_{n}$ is a permutation of $\{1,\dots,n\}$ and $\sgn(\sigma)\in\{-1,1\}$ is the signature of $\sigma$. The category $\nLie$ of Lie $n$-algebras being a subvariety of $\nLeib$, it is a semi-abelian variety. The canonical comonad on $\nLie$ is denoted by $\Hbb$. The abelianisation functor for Leibniz $n$-algebras restricts to a functor $\abnlie\colon{\nLie \to \Vect}$ which is left adjoint to the inclusion functor.

Given $n$ ideals $\NN_{1}$, \dots, $\NN_{n}$ of a Lie $n$-algebra $\LL$, by skew symmetry the object $[\NN_{1},\dots,\NN_{n}]$ is the ideal of $\LL$ generated by the brackets $[l_{1},\dots,l_{n}]$ for all $l_{1}\in \NN_{1}$, \dots, $l_{n}\in \NN_{n}$. (Here no further permutations of $(l_{1},\dots,l_{n})$ are necessary.) Again, a Lie $n$-algebra $\LL$ is abelian if and only if $[\LL,\dots,\LL]$ is zero; hence, for any $\LL$ in $\nLie$, its reflection $\abnlie (\LL)$ into $\Vect$ is $\LL/[\LL,\dots,\LL]$.

\begin{proposition}\label{Proposition-m-Fold-Central-Extension-Lie}
Consider $m\geq 1$. An $m$-fold extension $f\colon{\BB\to \AA}$ in $\nLie$ is central if and only if the object
\[
\sum_{I_{1}\cup\dots\cup I_{n}=\langle m \rangle}\Bigl[\bigcap_{i\in I_{1}}K[f_{i}],\dots, \bigcap_{i\in I_{n}}K[f_{i}]\Bigr]
\]
is zero.
\end{proposition}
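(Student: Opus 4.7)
The plan is to transplant the proof of Proposition~\ref{Proposition-m-Fold-Central-Extension} to the Lie $n$-algebra setting with essentially no modification. The key observation is that $\nLie$ is a subvariety of $\nLeib$, so finite limits (kernel pairs, intersections of kernels, equalisers) in $\nLie$ agree with those computed in $\nLeib$. Moreover, in $\nLie$ the commutator ideal $[\NN_{1},\dots,\NN_{n}]$ generated by brackets is the same as in $\nLeib$; skew symmetry only ensures that no further permutations of the arguments need to be considered. Thus the Galois-theoretic notion of $m$-fold $\Vect$-central extension in $\nLie$ is governed by the same formal commutator expression as in the Leibniz case.

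I would proceed by induction on $m$. For the base case $m=1$, I first prove the analogue of Proposition~\ref{Proposition-Central-Extension} for Lie $n$-algebras: an extension $f\colon \BB\to\AA$ in $\nLie$ with kernel $\KK$ is $\Vect$-central if and only if $[\KK,\BB,\dots,\BB]=0$. The computation given in the proof of Proposition~\ref{Proposition-Central-Extension} uses only the $n$-linearity of the bracket (to argue that $[b_{1},\dots,b_{i-1},0,b_{i+1},\dots,b_{n}]=0$) together with the structure of the kernel pair $R[f]$, both of which are available in $\nLie$. Hence the argument carries over verbatim.

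For the inductive step, assume the statement holds in dimension $m-1$. By the general description of higher central extensions recalled at the end of the Preliminaries, the $m$-fold extension $f$ is central if and only if the restrictions of the two kernel pair projections $(f_{0})_{\varnothing}$, $(f_{1})_{\varnothing}$ to
\[
\sum_{I_{1}\cup\dots\cup I_{n}=\langle m-1\rangle}\Bigl[\bigcap_{i\in I_{1}}K[R[f]_{i}],\dots,\bigcap_{i\in I_{n}}K[R[f]_{i}]\Bigr]\to \sum_{I_{1}\cup\dots\cup I_{n}=\langle m-1\rangle}\Bigl[\bigcap_{i\in I_{1}}K[\BB_{i}],\dots,\bigcap_{i\in I_{n}}K[\BB_{i}]\Bigr]
\]
coincide. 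For the ``only if'' direction, given a generator $[k_{1},\dots,k_{n}]$ of a summand indexed by a partition $I_{1}\cup\dots\cup I_{n}=\langle m\rangle$ with $m\in I_{j}$ and $k_{j}\in K[f_{m}]$, lift it to $R[f]_{\varnothing}$ by replacing $k_{j}$ with the pair $(k_{j},0)$; this lift lies in the appropriate summand over $\langle m-1\rangle$, and applying the coincident projections recovers $[k_{1},\dots,k_{j-1},0,k_{j+1},\dots,k_{n}]=0$ on one side and $[k_{1},\dots,k_{n}]$ on the other. Conversely, given a generator $[(b_{1},b_{1}+k_{1}),\dots,(b_{n},b_{n}+k_{n})]$ of the source, expanding $[b_{1}+k_{1},\dots,b_{n}+k_{n}]$ by $n$-linearity produces $[b_{1},\dots,b_{n}]$ together with a sum of terms each containing at least one factor $k_{i}\in K[f_{m}]$; every such term lies in a summand indexed by a partition of $\langle m\rangle$, so it vanishes under the hypothesis that~\eqref{Object-L} (read for $f$) is zero.

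I do not expect a serious obstacle: the proof is a mechanical transposition of the Leibniz argument. The only subtlety worth double-checking is that neither the base case nor the inductive expansion relies on any auxiliary permutations of bracket arguments (which would require care in the skew-symmetric setting); as noted in the paragraph preceding the statement, in $\nLie$ the commutator is already generated by the unpermuted brackets, so this is automatic.
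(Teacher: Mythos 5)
Your proposal is correct and follows exactly the paper's approach: the paper's proof of Proposition~\ref{Proposition-m-Fold-Central-Extension-Lie} simply states that the proof of Proposition~\ref{Proposition-m-Fold-Central-Extension} may be copied, which is precisely the transposition you carry out. Your additional remarks---that only $n$-linearity is used and that skew symmetry means the commutator ideal needs no extra permuted generators---correctly identify why the copying works.
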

\begin{proof}
The proof of Proposition~\ref{Proposition-m-Fold-Central-Extension} may be copied.
\end{proof}

\begin{theorem}[Hopf type formula for homology of Lie $n$-algebras]\label{Theorem-Lie-Vect}
Consider $m\geq 1$. If $f$ is an $m$-fold presentation of a Lie $n$-algebra $\LL$, then
\[
H_{m+1}(\LL,\abnlie)_{\Hbb}\cong\frac{[f_{\varnothing},\dots,f_{\varnothing}]\cap \bigcap_{i\in \langle m\rangle}K[f_{i}]}{\sum_{I_{1}\cup\dots\cup I_{n}=\langle m \rangle}\Bigl[\bigcap_{i\in I_{1}}K[f_{i}],\dots, \bigcap_{i\in I_{n}}K[f_{i}]\Bigr]}.
\]
\end{theorem}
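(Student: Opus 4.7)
My plan is to follow precisely the same route as in the proof of Theorem~\ref{Theorem-Leibniz-Vect}, invoking the general machinery of~\cite[Theorem~8.1]{EGVdL} together with the explicit description of higher central extensions provided by Proposition~\ref{Proposition-m-Fold-Central-Extension-Lie}. The point is that all the hypotheses needed to apply the main result of~\cite{EGVdL} are in place: the category $\nLie$ is a semi-abelian variety (being a subvariety of the semi-abelian variety $\nLeib$), $\Vect$ is a Birkhoff subcategory of $\nLie$, and $\Hbb$ is the canonical comonad on $\nLie$ induced by the forgetful/free adjunction to $\Set$. Moreover, as in the Leibniz case, the comonadic homology with coefficients in the abelianisation functor coincides (up to a dimension shift) with the Quillen homology of Lie $n$-algebras.

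Once this setting has been put in place, \cite[Theorem~8.1]{EGVdL} asserts that for any $m$-fold presentation $f$ of $\LL$,
\[
H_{m+1}(\LL,\abnlie)_{\Hbb}\cong \frac{[f_{\varnothing},\dots,f_{\varnothing}]\cap \bigcap_{i\in\langle m\rangle}K[f_{i}]}{L_{m}[f]},
\]
where the numerator is the ``top object'' of the kernel of the comparison map between $f$ and its Birkhoff reflection, and $L_{m}[f]$ is the $m$-dimensional commutator introduced in the preliminaries. Proposition~\ref{Proposition-m-Fold-Central-Extension-Lie} then identifies $L_{m}[f]$ with the sum of iterated commutator ideals displayed in the statement. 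Substituting this description yields the required formula.

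There is really no new obstacle here beyond what was already addressed for Leibniz $n$-algebras: the work has been done in Proposition~\ref{Proposition-m-Fold-Central-Extension-Lie}, whose proof in turn amounts to rerunning the argument of Proposition~\ref{Proposition-m-Fold-Central-Extension} in the skew-symmetric setting. The only small point worth checking is that the numerator $[f_{\varnothing},\dots,f_{\varnothing}]\cap\bigcap_{i}K[f_{i}]$ really does arise from the Birkhoff reflection into $\Vect$, but this is immediate from the description of $\abnlie$ as quotient by $[\LL,\dots,\LL]$ together with the fact that each $f_I$ for $I\subsetneq \langle m\rangle$ is projective. Hence the proof reduces to a one-line citation, exactly parallel to Theorem~\ref{Theorem-Leibniz-Vect}.
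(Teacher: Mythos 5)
Your proposal is correct and follows exactly the paper's own route: the result is obtained by combining Proposition~\ref{Proposition-m-Fold-Central-Extension-Lie} (which identifies $L_{m}[f]$ with the sum of commutator ideals) with~\cite[Theorem~8.1]{EGVdL}. The additional remarks you make about verifying the hypotheses of the general machinery are consistent with, and merely elaborate on, what the paper leaves implicit.
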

\begin{proof}
This is a combination of Proposition~\ref{Proposition-m-Fold-Central-Extension-Lie} with~\cite[Theorem~8.1]{EGVdL}.
\end{proof}

\section{The relative case}\label{Section-Relative}
Now we consider the homology theory which arises when the reflector from $\nLeib$ to $\nLie$ is derived. We characterise the central extensions of Leibniz $n$-algebras relative to the subvariety $\nLie$ (Propositions~\ref{Proposition-Relative-Central-Extension} and \ref{Proposition-m-Fold-Central-Extension-Relative}) and obtain a Hopf style formula for the relative homology (Theorem~\ref{Theorem-Leibniz-Lie}).

\subsection{The Liesation of a Leibniz $n$-algebra}
The inclusion of $\nLie$ into $\nLeib$ has a left adjoint, called the \defn{Liesation functor} and denoted
\[
\nlie\colon{\nLeib\to \nLie}.
\]
Let $\LL$ be a Leibniz $n$-algebra. Given a permutation $\sigma\in S_{n}$ and elements $l_{1}$, \dots, $l_{n}\in \LL$, we write
\[
\langle l_{1},\dots, l_{n}\rangle_{\sigma}=[l_{1},\dots,l_{n}]-\sgn(\sigma)[l_{\sigma(1)},\dots,l_{\sigma(n)}].
\]
Note that for any given $\sigma$, $\langle l_{1},\dots, l_{n}\rangle_{\sigma}$ is $n$-linear in the variables $l_{1}$, \dots, $l_{n}$ and thus determines a linear map $\langle -,\dots, -\rangle_{\sigma}\colon{\LL^{\tensor n}\to \LL}$. Given $n$ ideals $\NN_{1}$, \dots, $\NN_{n}$ of $\LL$, we write $\langle \NN_{1},\dots, \NN_{n}\rangle $ for the ideal of $\LL$ generated by all elements $\langle l_{1},\dots, l_{n}\rangle_{\sigma}$
where $l_{1}\in \NN_{1}$, \dots, $l_{n}\in \NN_{n}$ and $\sigma\in S_{n}$. We call this object the \defn{relative commutator} of $\NN_{1}$, \dots, $\NN_{n}$. Since a Leibniz $n$-algebra $\LL$ is a Lie $n$-algebra if and only if the relative commutator $\langle \LL,\dots, \LL\rangle$ is zero, the functor $\nlie$ maps the algebra $\LL$ to the quotient $\LL/\langle \LL,\dots, \LL\rangle$. Thus we obtain a commutative triangle of left adjoint functors:
\[
\xymatrix@!0@=4.5em{{\nLeib} \ar[rr]^-{\nlie} \ar[dr]_-{\abnleib} && \nLie \ar[dl]^-{\abnlie}\\
& \Vect}
\]
In case $n=2$ we regain the triangle of adjunctions considered in~\cite{CVdL}.

\begin{remark}
Since any permutation may be expressed as a composite of transpositions, the relative commutator $\langle \LL,\dots, \LL\rangle$ is generated by those brackets $[l_{1},\dots,l_{n}]$ in $\LL$ where $l_{i}=l_{i+1}$ for some $1\leq i\leq n-1$.
\end{remark}

\subsection{Relative central extensions}
We are now ready to characterise the central extensions of Leibniz $n$-algebras relative to the subvariety of Lie $n$-algebras. This is an extension of the case $n=2$ considered in~\cite{CVdL}.

\begin{proposition}\label{Proposition-Relative-Central-Extension}
An extension of Leibniz $n$-algebras $f\colon{\BB\to \AA}$ with kernel $\KK$ is central with respect to $\nLie$ if and only if the ideal $\langle \KK,\BB,\dots, \BB\rangle$ is zero.
\end{proposition}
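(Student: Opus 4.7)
The plan is to adapt the proof of Proposition~\ref{Proposition-Central-Extension} to the relative setting, replacing the ordinary bracket by the generators $\langle -,\dots,-\rangle_{\sigma}$ of the relative commutator. By the definition of $\B$-centrality recalled in Section~\ref{Dimension-one}, applied to $\B=\nLie$, the extension $f$ is central precisely when the restrictions of the kernel pair projections $f_{0}$, $f_{1}\colon R[f]\to\BB$ to the zero-dimensional commutator ideals coincide as morphisms
\[
\langle R[f],\dots,R[f]\rangle\to\langle\BB,\dots,\BB\rangle.
\]
So the task is to show that this coincidence is equivalent to $\langle\KK,\BB,\dots,\BB\rangle=0$.

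For the forward direction, I would pick an arbitrary generator $\langle k,b_{2},\dots,b_{n}\rangle_{\sigma}$ of $\langle\KK,\BB,\dots,\BB\rangle$ and lift it to
\[
\langle (k,0),(b_{2},b_{2}),\dots,(b_{n},b_{n})\rangle_{\sigma}\in\langle R[f],\dots,R[f]\rangle,
\]
using that $f(k)=0=f(0)$ implies $(k,0)\in R[f]$. The image under $f_{0}$ recovers $\langle k,b_{2},\dots,b_{n}\rangle_{\sigma}$, while the image under $f_{1}$ equals $\langle 0,b_{2},\dots,b_{n}\rangle_{\sigma}$; the latter vanishes by $n$-linearity of each of the two brackets making up $\langle -,\dots,-\rangle_{\sigma}$, since the argument $0$ appears in both of them. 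Centrality forces these two images to coincide, so the generator is zero.

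For the converse, assume $\langle\KK,\BB,\dots,\BB\rangle=0$. A generator of $\langle R[f],\dots,R[f]\rangle$ has the form $\langle (b_{1},b_{1}+k_{1}),\dots,(b_{n},b_{n}+k_{n})\rangle_{\sigma}$ with $b_{i}\in\BB$ and $k_{i}\in\KK$. It maps under $f_{0}$ to $\langle b_{1},\dots,b_{n}\rangle_{\sigma}$ and under $f_{1}$ to $\langle b_{1}+k_{1},\dots,b_{n}+k_{n}\rangle_{\sigma}$. Exploiting $n$-linearity of $\langle -,\dots,-\rangle_{\sigma}$, I would expand the difference as a sum of terms $\langle c_{1},\dots,c_{n}\rangle_{\sigma}$, each $c_{i}$ being either $b_{i}$ or $k_{i}$ with at least one $c_{i}$ equal to a $k_{i}\in\KK$. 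Every such term then lies in $\langle\KK,\BB,\dots,\BB\rangle=0$, so $f_{0}$ and $f_{1}$ agree on all generators of $\langle R[f],\dots,R[f]\rangle$.

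The only delicate step, and the main obstacle, is the last assertion: a term like $\langle b_{1},\dots,b_{j-1},k_{j},b_{j+1},\dots,b_{n}\rangle_{\sigma}$, in which $\KK$ appears in a non-initial slot, must still be recognised as a member of $\langle\KK,\BB,\dots,\BB\rangle$. This amounts to the invariance of the relative commutator ideal under permutation of its arguments; it follows from the remark preceding the statement, because the ideal is generated by transposition-type elements $\langle -,\dots,-\rangle_{\tau}$ and each such element is symmetric under the corresponding swap of arguments, so iterating adjacent transpositions yields $\langle\NN_{\pi(1)},\dots,\NN_{\pi(n)}\rangle=\langle\NN_{1},\dots,\NN_{n}\rangle$ for every $\pi\in S_{n}$.
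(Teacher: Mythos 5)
Your proof is correct and follows essentially the same route as the paper's: for the forward direction you lift a generator by pairing the kernel element with $0$, and for the converse you expand $\langle b_{1}+k_{1},\dots,b_{n}+k_{n}\rangle_{\sigma}$ by $n$-linearity and discard the terms lying in $\langle\KK,\BB,\dots,\BB\rangle$. The one point you rightly single out --- that a term $\langle b_{1},\dots,k_{j},\dots,b_{n}\rangle_{\sigma}$ with $k_{j}\in\KK$ in a non-initial slot still belongs to $\langle\KK,\BB,\dots,\BB\rangle$ --- is used tacitly in the paper's telescoping computation as well, and it is genuinely true: setting $l_{j}=m_{\pi^{-1}(j)}$ one checks $\langle m_{1},\dots,m_{n}\rangle_{\sigma}=\sgn(\pi)\bigl(\langle l_{1},\dots,l_{n}\rangle_{\pi\sigma}-\langle l_{1},\dots,l_{n}\rangle_{\pi}\bigr)$, which gives the permutation-invariance you invoke directly (the remark you cite only covers the case of equal arguments, so this small computation is the cleaner justification).
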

\begin{proof}
The extension $f$ is central if and only if the restrictions of the kernel pair projections $f_{0},f_{1}\colon{R[f]\to \BB}$ to morphisms
\[
\langle R[f],\dots,R[f]\rangle \to \langle \BB,\dots \BB\rangle
\]
coincide.

If $x_{1}\in \KK$ and $x_{2}$, \dots, $x_{n}\in \BB$ then $y_{1}=(x_{1},0)$, $y_{2}=(x_{2},x_{2})$, \dots, $y_{n}=(x_{n},x_{n})$ are all elements of $R[f]$. Moreover, for all $\sigma \in S_{n}$,
\[
\langle x_{1},\dots, x_{n}\rangle_{\sigma}=f_{0}\langle y_{1},\dots, y_{n}\rangle_{\sigma},
\]
while also
\[
f_{1}\langle y_{1},\dots, y_{n}\rangle_{\sigma}=\langle 0,x_{2},\dots, x_{n}\rangle_{\sigma}=0.
\]
Hence $f$ being central implies that $\langle \KK,\BB,\dots, \BB\rangle=0$.

Conversely, any generator of $\langle R[f],\dots,R[f]\rangle$ may be written as
\[
\langle(b_{1},b_{1}+k_{1}),\dots, (b_{n},b_{n}+k_{n})\rangle_{\sigma}
\]
for some $b_{1}$, \dots, $b_{n}\in \BB$, $k_{1}$, \dots, $k_{n}\in \KK$ and $\sigma\in S_{n}$. Now if the ideal $\langle \KK,\BB,\dots, \BB\rangle$ is zero, $\langle b_{1},\dots b_{n}\rangle_{\sigma}$, the image of this generator through $f_{0}$, is equal to its image through $f_{1}$, since by $n$-linearity of $\langle -,\dots,-\rangle_{\sigma}$,
\begin{align*}
&\langle b_{1}+k_{1}, b_{2}+k_{2},\dots,b_{n}+k_{n}\rangle_{\sigma}\\
&=\langle b_{1},b_{2}+k_{2},\dots,b_{n}+k_{n}\rangle_{\sigma}+\langle k_{1},b_{2}+k_{2},\dots,b_{n}+k_{n}\rangle_{\sigma}\\
&=\langle b_{1},b_{2}+k_{2},\dots,b_{n}+k_{n}\rangle_{\sigma}=\dots=\langle b_{1},\dots b_{n}\rangle_{\sigma}.
\end{align*}
This proves that the extension $f$ is central.
\end{proof}

In case $n=2$, $\langle \KK,\BB\rangle=0$ if and only if $\KK \subseteq Z_{\Lie}(\BB)$, so we recover Proposition~4.3 in~\cite{CVdL} for central extensions relative to $\Lie$.

\subsection{Higher relative central extensions}
Using the ideas from the proof of Proposition~\ref{Proposition-Relative-Central-Extension}, it is now easy to adapt the proof of Proposition~\ref{Proposition-m-Fold-Central-Extension} to the relative case so that the following characterisation of $m$-fold $\nLie$-central extensions is obtained.

\begin{proposition}\label{Proposition-m-Fold-Central-Extension-Relative}
Consider $m\geq 1$. An $m$-fold extension $f\colon{\BB\to \AA}$ in $\nLeib$ is central with respect to $\nLie$ if and only if the object
\[
\sum_{I_{1}\cup\dots\cup I_{n}=\langle m \rangle}\Bigl\langle\bigcap_{i\in I_{1}}K[f_{i}],\dots, \bigcap_{i\in I_{n}}K[f_{i}]\Bigr\rangle
\]
is zero.\noproof
\end{proposition}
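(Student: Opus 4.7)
The plan is to proceed by induction on $m$, in direct parallel with the proof of Proposition~\ref{Proposition-m-Fold-Central-Extension}, but with the absolute commutator $[-,\dots,-]$ everywhere replaced by the relative commutator $\langle -,\dots,-\rangle$. The base case $m=1$ is exactly Proposition~\ref{Proposition-Relative-Central-Extension}. For the inductive step, by definition $f$ is $\nLie$-central if and only if the restrictions of the kernel pair projections $f_0,f_1\colon R[f]\to\BB$ to the top objects
\[
L_{m-1}[R[f]]\to L_{m-1}[\BB]
\]
coincide, where by the induction hypothesis these $L_{m-1}$ objects are the $(m-1)$-indexed sums of iterated relative commutators $\bigl\langle\bigcap_{i\in I_1}K[\cdot_i],\dots,\bigcap_{i\in I_n}K[\cdot_i]\bigr\rangle$ taken over partitions $I_1\cup\cdots\cup I_n=\langle m-1\rangle$.

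For the ``if'' direction, I would argue on a generator $\langle k_1,\dots,k_n\rangle_\sigma$ of $\bigl\langle\bigcap_{i\in I_1}K[f_i],\dots,\bigcap_{i\in I_n}K[f_i]\bigr\rangle$ for a partition $I_1\cup\cdots\cup I_n=\langle m\rangle$. Choose $j$ with $m\in I_j$ and form the lift $(k_1,k_1),\dots,(k_{j-1},k_{j-1}),(k_j,0),(k_{j+1},k_{j+1}),\dots,(k_n,k_n)$ in $R[f]$. As in the proof of Proposition~\ref{Proposition-m-Fold-Central-Extension}, this element lies in the corresponding iterated relative commutator for the partition $I_1\cup\cdots\cup(I_j\setminus\{m\})\cup\cdots\cup I_n=\langle m-1\rangle$ of $R[f]$, so $f_0$ and $f_1$ agree on it; applying $f_1$ produces $\langle k_1,\dots,k_{j-1},0,k_{j+1},\dots,k_n\rangle_\sigma=0$, giving the required vanishing.

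Conversely, for the ``only if'' direction, any generator of $L_{m-1}[R[f]]$ has the form $\langle(b_1,b_1+k_1),\dots,(b_n,b_n+k_n)\rangle_\sigma$ with $k_i\in K[f_m]$ and $b_i\in\BB_\varnothing$. Its image under $f_0$ is $\langle b_1,\dots,b_n\rangle_\sigma$; under $f_1$ it is $\langle b_1+k_1,\dots,b_n+k_n\rangle_\sigma$. Here I would exploit $n$-linearity of $\langle-,\dots,-\rangle_\sigma$ (the same property that drives the proof of Proposition~\ref{Proposition-Relative-Central-Extension}) to expand this $n$-linearly and observe that every term in which at least one $k_i$ appears lies in the sum~(\ref{Object-L})-style hypothesis, which is assumed to vanish; the only surviving term is $\langle b_1,\dots,b_n\rangle_\sigma$, so $f_0$ and $f_1$ agree. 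The only real point of care---and the one place where the relative case differs from the absolute---is that $\langle-,\dots,-\rangle_\sigma$ is indexed by a permutation and therefore is a family of $n$-linear maps rather than a single operation; but since the argument is applied to each $\sigma$ separately and only uses $n$-linearity, this causes no real obstacle.
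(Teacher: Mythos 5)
Your proposal is correct and is precisely the argument the paper intends: the paper gives no written-out proof of Proposition~\ref{Proposition-m-Fold-Central-Extension-Relative}, merely remarking that the proof of Proposition~\ref{Proposition-m-Fold-Central-Extension} adapts to the relative case using the ideas of Proposition~\ref{Proposition-Relative-Central-Extension}, and your induction on $m$ with the relative commutator $\langle -,\dots,-\rangle_{\sigma}$ replacing the bracket (lifting a generator by inserting $0$ in the slot indexed by a part containing $m$, and conversely expanding $\langle b_{1}+k_{1},\dots,b_{n}+k_{n}\rangle_{\sigma}$ by $n$-linearity so that every cross term lands in a summand indexed by a partition of $\langle m\rangle$) is exactly that adaptation. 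The only quibbles are cosmetic: your ``if''/``only if'' labels are swapped relative to the statement, and, as in the paper's own proof of Proposition~\ref{Proposition-m-Fold-Central-Extension}, one should note that the argument given on generators extends to general elements of the ideals involved.
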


This now implies

\begin{theorem}[Hopf type formula for Leibniz $n$-algebras vs.\ Lie $n$-algebras]\label{Theorem-Leibniz-Lie}
Consider $m\geq 1$. If $f$ is an $m$-fold presentation of a Leibniz $n$-algebra $\LL$, then an isomorphism
\[
H_{m+1}(\LL,\nlie)_{\G}\cong\frac{\langle f_{\varnothing},\dots,f_{\varnothing}\rangle\cap \bigcap_{i\in \langle m\rangle}K[f_{i}]}{\sum_{I_{1}\cup\dots\cup I_{n}=\langle m \rangle}\Bigl\langle\bigcap_{i\in I_{1}}K[f_{i}],\dots, \bigcap_{i\in I_{n}}K[f_{i}]\Bigr\rangle}
\]
exists.\noproof
\end{theorem}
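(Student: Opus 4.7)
The plan is to combine the general higher Hopf formula of~\cite[Theorem~8.1]{EGVdL}, applied to the Birkhoff reflection $\nlie\colon \nLeib \to \nLie$, with the characterisation of higher central extensions already obtained in Proposition~\ref{Proposition-m-Fold-Central-Extension-Relative}. This is a direct transcription, to the relative setting, of the argument used for Theorem~\ref{Theorem-Leibniz-Vect}.

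First I would recall that for any $m$-fold presentation $f$ of an object $\LL$ of a semi-abelian variety $\A$ and any Birkhoff subcategory $\B$, Theorem~8.1 of~\cite{EGVdL} produces an isomorphism
\[
H_{m+1}(\LL, I)_{\G} \;\cong\; \frac{L_{0}[f_{\varnothing}] \cap \bigcap_{i\in\langle m\rangle} K[f_i]}{L_{m}[f]},
\]
where $L_{0}[f_{\varnothing}]=[f_{\varnothing}]_{\B}$ is the kernel of the unit of the reflection $I\colon\A\to\B$ at $f_{\varnothing}$, and $L_{m}[f]$ is the top-object commutator measuring the failure of $f$ to be $\B$-central that was described in the preliminaries.

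It then remains to identify the two pieces in the present setting, with $\A=\nLeib$ and $\B=\nLie$. By the very definition of the Liesation functor we have $L_{0}[f_{\varnothing}] = \langle f_{\varnothing},\dots,f_{\varnothing}\rangle$, which takes care of the numerator. For the denominator, Proposition~\ref{Proposition-m-Fold-Central-Extension-Relative} is tailored precisely to this purpose: it identifies $L_{m}[f]$ with
\[
\sum_{I_{1}\cup\dots\cup I_{n}=\langle m \rangle} \Bigl\langle \bigcap_{i\in I_{1}} K[f_{i}],\dots, \bigcap_{i\in I_{n}} K[f_{i}] \Bigr\rangle .
\]
Substituting both identifications into the general Hopf formula delivers the theorem. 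No genuine obstacle arises; all of the real work---namely the stability of the relative commutator $\langle -,\dots,-\rangle$ under the iterative construction of higher central extensions---is packaged inside Proposition~\ref{Proposition-m-Fold-Central-Extension-Relative}, so the present argument is essentially bookkeeping. The only point that would be worth verbalising is the coherence between the inductive ``top object'' definition of $L_{m}[f]$ used in~\cite{EGVdL} and the explicit sum produced by that proposition, which is exactly what its inductive proof establishes.
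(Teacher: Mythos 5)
Your proposal is correct and matches the paper's own (implicit) argument exactly: the paper states this theorem with no written proof precisely because, as with Theorems~\ref{Theorem-Leibniz-Vect} and~\ref{Theorem-Lie-Vect}, it follows by combining Proposition~\ref{Proposition-m-Fold-Central-Extension-Relative} with Theorem~8.1 of~\cite{EGVdL}, identifying the numerator via $L_{0}[f_{\varnothing}]=\langle f_{\varnothing},\dots,f_{\varnothing}\rangle$ and the denominator via $L_{m}[f]$. Nothing further is needed.
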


In case $n=2$ and $m=1$ this formula takes the usual shape
\[
H_{2}(\LL,\lie)_{\G}\cong\frac{\langle\FF,\FF\rangle\cap\RR}{\langle\RR,\FF\rangle}
\]
for
\[
\xymatrix{0 \ar[r] & \RR \ar[r] & \FF \ar[r] & \LL \ar[r] & 0}
\]
any free presentation of the Leibniz algebra $\LL$.

\section{Universal central extensions}\label{Section-UCE}
Let $\A$ be a semi-abelian category and $\B$ a Birkhoff subcategory of $\A$. Let~$I$ denote the reflector from $\A$ to $\B$. An object $A$ of $\A$ is called \defn{perfect with respect to $\B$} when $IA$ is zero. A $\B$-central extension $u\colon{U\to A}$ is called \defn{universal} when for every $\B$-central extension $f\colon{B\to A}$ there exists a unique map $\overline{f}\colon{U\to B}$ such that $f\comp \overline{f}=u$. An object $A$ admits a universal $\B$-central extension if and only if it is $\B$-perfect, in which case this universal $\B$-central extension $u^{I}_{A}\colon{U(A,I)\to A}$ is constructed as follows~\cite{CVdL}: given a $1$-presentation $f\colon{B\to A}$ with kernel $K$, the object $U(A,I)$ is the quotient $[B,B]_{\B}/[K,B]_{\B}$, and $u$ is the induced morphism.

The results from~\cite{CVdL} on universal central extensions in semi-abelian categories now particularise to the following generalisation of~\cite[Section~4.2]{CVdL} where the case $n=2$ is treated.

\begin{proposition}\label{UCE-Leibniz}
A $\Vect$-central extension of Leibniz $n$-algebras $u\colon{\UU\to\LL}$ is universal if and only if $H_{2}(\UU,\abnleib)_{\G}=H_{1}(\UU,\abnleib)_{\G}=0$.

A Leibniz $n$-algebra $\LL$ is perfect with respect to $\Vect$ if and only if $\LL=[\LL,\dots,\LL]$ if and only if $\LL$ admits a universal $\Vect$-central extension
\[
u_{\LL}^{\abnleib}\colon{U(\LL,\abnleib)\to\LL}
\]
with kernel $H_{2}(\LL,\abnleib)_{\G}$.
\end{proposition}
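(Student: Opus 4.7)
The plan is to specialise the general theory of universal central extensions in semi-abelian categories developed in~\cite{CVdL} to the Birkhoff pair $(\nLeib,\Vect)$, using the explicit description of the relative commutators given by Proposition~\ref{Proposition-Central-Extension} and the Hopf formula of Theorem~\ref{Theorem-Leibniz-Vect}.

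For the first assertion, I would invoke directly the abstract result of~\cite{CVdL} which says that, in any semi-abelian category with chosen Birkhoff subcategory $\B$, a $\B$-central extension $u\colon{U\to A}$ is universal if and only if $H_{2}(U,I)_{\G}=H_{1}(U,I)_{\G}=0$, where $I$ is the reflector. Since $\nLeib$ is semi-abelian and $\Vect$ is a Birkhoff subcategory with reflector $\abnleib$, this translates verbatim into the first sentence of the proposition with no extra argument required.

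For the second assertion, I first observe that $\LL$ is $\Vect$-perfect by definition exactly when $\abnleib(\LL)=0$; since $\abnleib(\LL)=\LL/[\LL,\dots,\LL]$ as recorded earlier in Section~\ref{Section-Leibniz}, this is equivalent to $\LL=[\LL,\dots,\LL]$. If $\LL$ is perfect, the general construction from~\cite{CVdL} associates to any one-fold projective presentation $f\colon{\BB\to\LL}$ with kernel $\KK$ the universal $\Vect$-central extension
\[
u_{\LL}^{\abnleib}\colon{U(\LL,\abnleib)=\frac{[\BB]_{\Vect}}{[\KK,\BB]_{\Vect}}\to\LL},
\]
whose kernel is the Hopf quotient $([\BB]_{\Vect}\cap\KK)/[\KK,\BB]_{\Vect}$. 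In our setting the zero-dimensional commutator $[\BB]_{\Vect}$ is simply $[\BB,\dots,\BB]$, while Proposition~\ref{Proposition-Central-Extension} identifies the one-dimensional commutator $[\KK,\BB]_{\Vect}=L_{1}[f]$ with $[\KK,\BB,\dots,\BB]$. Hence the construction produces a genuine quotient of Leibniz $n$-algebras and Theorem~\ref{Theorem-Leibniz-Vect} at $m=1$ identifies its kernel with $H_{2}(\LL,\abnleib)_{\G}$.

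The main obstacle is purely bookkeeping: one needs to check that the abstract $\Vect$-commutators used in the construction of~\cite{CVdL} coincide with the concrete bracket commutators in $\nLeib$. Both of these identifications are already in hand — the zero-dimensional one from the explicit description of $\abnleib$, and the one-dimensional one from Proposition~\ref{Proposition-Central-Extension} — so the proof ultimately reduces to citing~\cite{CVdL} together with Theorem~\ref{Theorem-Leibniz-Vect}.
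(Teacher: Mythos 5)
Your proposal is correct and follows essentially the same route as the paper, whose entire proof is the citation of \cite[Theorem~2.13]{CVdL} and \cite[Corollary~2.15]{CVdL}; you simply make explicit the bookkeeping (identifying $[\BB]_{\Vect}$ with $[\BB,\dots,\BB]$ and $[\KK,\BB]_{\Vect}$ with $[\KK,\BB,\dots,\BB]$ via Proposition~\ref{Proposition-Central-Extension}, and the kernel via Theorem~\ref{Theorem-Leibniz-Vect} at $m=1$) that the paper leaves implicit. No gaps.
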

\begin{proof}
This is a combination of~\cite[Theorem~2.13]{CVdL} with \cite[Corollary~2.15]{CVdL}.
\end{proof}

The following explicit construction of the universal $\Vect$-central extension of a $\Vect$-perfect Leibniz $n$-algebra was obtained in~\cite{Casas:natp}.

\begin{construction}\label{Construction-UCE-nLb}
Let $\LL$ be a Leibniz $n$-algebra. We write $\UU$ for the quotient of the vector space $\LL^{\otimes n }$ by the subspace spanned by the elements of the form
\[
[l_1,\dots,l_n] \otimes {l'_2 \otimes \dots \otimes l'_{n}}
-\sum_{1\leq i\leq n}l_1 \otimes  \dots \otimes [l_i,l'_2,\dots,l'_n]
\otimes \dots \otimes l_n.
\]
The vector space $\UU$ inherits a structure of Leibniz $n$-algebra by means of the $n$-ary bracket
\[
 [l_1^1 * \dots *l_n^1, \dots, l_1^n *\dots *l_n^n] = [l_1^1, \dots,l_n^1] * \dots * [l_1^n, \dots,l_n^n],
\]
where $l_1 * \dots * l_n\in \UU$ denotes the equivalence class of $l_1 \otimes\dots \otimes l_n\in \LL^{\otimes n}$. If $\LL$ is a $\Vect$-perfect Leibniz $n$-algebra then by~\cite[Theorem~5]{Casas:natp} we have that the morphism of Leibniz $n$-algebras
\[
u\colon \UU\to {\LL}\colon l_1 * \dots *l_n \mapsto [l_1,\dots,l_n]
\]
is a universal $\Vect$-central extension of $\LL$.
\end{construction}

The corresponding result for Lie $n$-algebras looks as follows.

\begin{proposition}\label{UCE-Lie}
A $\Vect$-central extension of Lie $n$-algebras $u\colon{\UU\to\LL}$ is universal if and only if $H_{2}(\UU,\abnlie)_{\Hbb}=H_{1}(\UU,\abnlie)_{\Hbb}=0$.

A Lie $n$-algebra $\LL$ is perfect with respect to $\Vect$ if and only if $\LL=[\LL,\dots,\LL]$ if and only if it admits a universal $\Vect$-central extension
\[
u_{\LL}^{\abnlie}\colon{U(\LL,\abnlie)\to\LL}
\]
with kernel $H_{2}(\LL,\abnlie)_{\Hbb}$.\noproof
\end{proposition}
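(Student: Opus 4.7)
The plan is to reduce the statement to the general semi-abelian theory developed in~\cite{CVdL}, mirroring the argument used for Proposition~\ref{UCE-Leibniz}. Indeed, Section~\ref{Section-Lie} established that $\nLie$ is a semi-abelian variety and that $\Vect$, viewed as the full subcategory of abelian Lie $n$-algebras (those with trivial bracket), is reflective in $\nLie$ via $\abnlie\colon \nLie\to\Vect$. Since abelianness is preserved by subobjects and regular quotients, $\Vect$ is a Birkhoff subcategory of $\nLie$. The canonical comonad on $\nLie$ induced by the forgetful/free adjunction to $\Set$ is $\Hbb$. Hence the adjunction $\abnlie\dashv{\supset}$ together with $\Hbb$ fits the hypotheses of~\cite{CVdL} exactly as in the Leibniz case.

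First I would apply~\cite[Theorem~2.13]{CVdL}: in any semi-abelian category $\A$ with Birkhoff subcategory $\B$, reflector $I$ and a comonad $\G$ for which the canonical simplicial resolution computes Quillen homology, a $\B$-central extension $u\colon{\UU\to\LL}$ is universal if and only if $H_{1}(\UU,I)_{\G}=H_{2}(\UU,I)_{\G}=0$. Specialising to $\A=\nLie$, $\B=\Vect$, $I=\abnlie$ and $\G=\Hbb$ yields the first statement of the proposition.

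For the second statement I would invoke~\cite[Corollary~2.15]{CVdL}, which says that an object $\LL$ admits a universal $\B$-central extension if and only if it is $\B$-perfect (i.e.\ $I\LL=0$), in which case the kernel of $u^{I}_{\LL}\colon U(\LL,I)\to\LL$ is $H_{2}(\LL,I)_{\G}$. Since $\abnlie(\LL)=\LL/[\LL,\dots,\LL]$ (as recalled at the start of Section~\ref{Section-Lie}), the condition $\abnlie(\LL)=0$ is equivalent to $\LL=[\LL,\dots,\LL]$; this gives the chain of equivalences and identifies the kernel of $u^{\abnlie}_{\LL}$ with $H_{2}(\LL,\abnlie)_{\Hbb}$.

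There is essentially no obstacle: the main work — setting up the semi-abelian framework, identifying $\abnlie$ as the Birkhoff reflector, and recognising that comonadic homology relative to $\Hbb$ agrees with Quillen homology — has already been carried out in Section~\ref{Section-Lie}. The only point worth verifying explicitly is the Birkhoff condition, and this is immediate from the fact that the trivial-bracket subvariety is stable under subalgebras and quotients. Once that is noted, the proposition follows by direct citation, precisely as in the proof of Proposition~\ref{UCE-Leibniz}.
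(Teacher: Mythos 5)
Your proposal is correct and matches the paper's (implicit) argument exactly: the paper omits the proof precisely because it is the same specialisation of \cite[Theorem~2.13]{CVdL} and \cite[Corollary~2.15]{CVdL} used for Proposition~\ref{UCE-Leibniz}, now with $\A=\nLie$, $\B=\Vect$, $I=\abnlie$ and the comonad $\Hbb$. Your explicit verification of the Birkhoff condition and the identification $\abnlie(\LL)=\LL/[\LL,\dots,\LL]$ are exactly the points the paper takes for granted from Section~\ref{Section-Lie}.
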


Next to the categorical construction recalled above, there is the following explicit construction---which does not use free objects---of the universal $\Vect$-central extension of a perfect Lie $n$-algebra.

\begin{construction}\label{Construction-UCE-nLie}
Let $\LL$ be a Lie $n$-algebra. We write $\UU$ for the quotient of the vector space $\LL^{\tensor n}$ by the subspace spanned by the elements of the form
\[
[l_1,l_2,\dots,l_n] \otimes {l'_2 \otimes \dots \otimes l'_{n}}
-\sum_{1\leq i\leq n}l_1 \otimes {l_2\otimes \dots \otimes [l_i,l'_2,\dots,l'_n]
\otimes \dots \otimes l_n}
\]
and all
\[
\text{$l_1 \otimes l_2 \otimes \dots \otimes l_n$ with $l_i=l_{i+1}$ for some $i\in\{1,\dots,n-1\}$.}
\]
Let $l_1\odot l_2\odot \dots\odot l_n$ denote the equivalence class of $l_1 \otimes l_2\otimes \dots \otimes l_n$. It is routine to check that $\UU$ carries a Lie $n$-algebra structure given by the $n$-ary bracket
\begin{equation}\label{construction}
 [l_1^1\odot l_2^1\odot \dots\odot l_n^1, \dots, l_1^n\odot l_2^n\odot
\dots \odot l_n^n] = [l_1^1,l_2^1, \dots,l_n^1]\odot \dots \odot [l_1^n,l_2^n,
\dots,l_n^n].
\end{equation}
Moreover, there is a well-defined morphism of Lie $n$-algebras
\[
u\colon {\UU\to\LL}\colon {l_1\odot l_2\odot \dots \odot l_n \mapsto [l_1,l_2,\dots,l_n]}.
\]
Now suppose that $\LL$ is a $\Vect$-perfect Lie $n$-algebra. Then it follows from the equality~\eqref{construction} that $\UU$ is also a $\Vect$-perfect Lie $n$-algebra. Furthermore, $u$ is an epimorphism of Lie $n$-algebras.

We claim that $u\colon {\UU\to {\LL}}$ is a universal $\Vect$-central extension of the $\Vect$-perfect Lie $n$-algebra $\LL$. Indeed, thanks again to the equality~\eqref{construction}, the ideal $[K[u],\UU,\dots,\UU]$ is trivial and thus $u$ is a $\Vect$-central extension. Given any other $\Vect$-central extension of $\LL$, say $f\colon{\HH \to \LL}$, there is a morphism of Lie $n$-algebras $\overline{f}\colon{\UU\to \HH}$ given by $\overline{f} {(l_1\odot l_2\odot \dots \odot l_n)}= [h_1,h_2,\dots,h_n]$, where $h_i\in \HH$ such that $f(h_i)=l_i$ for $1\leq i\leq n$. Here $\overline{f}$ is well-defined because of the centrality of ${f}\colon{\HH\to \LL}$. Clearly $f\comp\overline{f}=u$, and if $g\colon{\UU\to \HH}$ is another morphism with this property, then for any $x\in \UU$ there exists a $z$ in the centre of $\HH$ such that $g(x)=\overline{f}(x)+z$. It follows that $g[x_1,\dots,x_n]=\overline{f}[x_1,\dots,x_n]$, for all $x_i\in \UU$ and $1\leq i\leq n$. Since $\UU$ is a $\Vect$-perfect Lie $n$-algebra we deduce that $g=\overline{f}$.
\end{construction}

\begin{remark}
For $n=2$ the Lie algebra $\UU$ in Construction~\ref{Construction-UCE-nLie} is isomorphic to the non-abelian Lie exterior square of $\LL$. Thus if $\LL$ is a $\Vect$-perfect Lie algebra then it is the same as the non-abelian Lie tensor square of $\LL$~\cite{El0}. This fact follows easily from~\cite[Proposition 1]{IKL}. Moreover, we recover the description of the universal central extension of a $\Vect$-perfect Lie algebra obtained in~\cite[Theorem~11]{El0}.
\end{remark}

In the relative case we obtain the next result.

\begin{proposition}\label{UCE-Leibniz-Lie}
An $\nLie$-central extension of Leibniz $n$-algebras $u\colon{\UU\to\LL}$ is universal if and only if $H_{2}(\UU,\nlie)_{\G}=H_{1}(\UU,\nlie)_{\G}=0$.

A Leibniz $n$-algebra $\LL$ is perfect with respect to $\nLie$ if and only if $\LL=\langle\LL,\dots,\LL\rangle$ if and only if it admits a universal $\nLie$-central extension
\[
u_{\LL}^{\nlie}\colon{U(\LL,\nlie)\to\LL}
\]
with kernel $H_{2}(\LL,\nlie)_{\G}$.\noproof
\end{proposition}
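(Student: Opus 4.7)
The strategy is to invoke the general theory of universal central extensions in semi-abelian categories developed in~\cite{CVdL}, in exactly the same manner as for Propositions~\ref{UCE-Leibniz} and~\ref{UCE-Lie}. What needs to be checked is that the Birkhoff reflection $\nlie\colon\nLeib\to\nLie$, together with the canonical comonad $\G$ on $\nLeib$, fits the hypotheses required there. Since $\nLeib$ is a semi-abelian variety and $\nLie$ is a subvariety (hence automatically a Birkhoff subcategory), this is immediate.

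For the characterisation of perfect objects, I would observe that by definition $\LL$ in $\nLeib$ is $\nLie$-perfect precisely when its reflection $\nlie(\LL)$ is zero, which by construction of the Liesation functor amounts to the equality $\LL=\langle\LL,\dots,\LL\rangle$. The equivalence between universality of $u\colon\UU\to\LL$ and the simultaneous vanishing of $H_{1}(\UU,\nlie)_{\G}$ and $H_{2}(\UU,\nlie)_{\G}$ is then a direct application of~\cite[Theorem~2.13]{CVdL}. The existence of a universal $\nLie$-central extension $u_{\LL}^{\nlie}\colon U(\LL,\nlie)\to\LL$ for $\nLie$-perfect $\LL$, together with the identification of its kernel as $H_{2}(\LL,\nlie)_{\G}$, then follows from~\cite[Corollary~2.15]{CVdL}; the kernel identification can additionally be cross-checked against Theorem~\ref{Theorem-Leibniz-Lie} specialised to a one-fold presentation of $\LL$.

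There is essentially no obstacle here: the whole point of recasting the earlier results in categorical Galois-theoretic language is that the proofs of Propositions~\ref{UCE-Leibniz} and~\ref{UCE-Lie} transfer verbatim once the Birkhoff subcategory is changed from $\Vect$ to $\nLie$ and the absolute commutator $[-,\dots,-]$ is replaced by the relative commutator $\langle-,\dots,-\rangle$. Proposition~\ref{Proposition-Relative-Central-Extension} ensures that the abstract notion of central extension appearing in the CVdL theorems specialises correctly to the description used in the statement, so that the entire argument reduces to a single invocation of~\cite[Theorem~2.13]{CVdL} combined with~\cite[Corollary~2.15]{CVdL}.
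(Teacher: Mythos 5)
Your proposal is correct and matches the paper's intended argument: the paper leaves Proposition~\ref{UCE-Leibniz-Lie} without proof precisely because, as for Proposition~\ref{UCE-Leibniz}, it is a direct combination of \cite[Theorem~2.13]{CVdL} and \cite[Corollary~2.15]{CVdL} applied to the Birkhoff subcategory $\nLie$ of $\nLeib$, with the identification of $\nLie$-perfectness as $\LL=\langle\LL,\dots,\LL\rangle$ coming from the construction of the Liesation functor. Nothing further is needed.
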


Finally we describe the relation between the universal $\Vect$-central extensions in $\nLeib$ and $\nLie$. Given a $\Vect$-perfect Lie $n$-algebra $\LL$, it is also $\Vect$-perfect as a Leibniz $n$-algebra. The universal $\Vect$-central extension in
$\nLie$,
\[
u^{\abnlie}_{\LL}\colon U(\LL,\abnlie)\to {\LL},
\]
is a  $\Vect$-central extension in $\nLeib$ of $\LL$. Hence there is a morphism of Leibniz $n$-algebras $f\colon U(\LL,\abnleib)\to U(\LL,\abnlie)$ such that
$u^{\abnleib}=u^{\abnlie}\comp f$. Using the notations as in Construction \ref{Construction-UCE-nLb} and Construction \ref{Construction-UCE-nLie}, $f$ is explicitly given by
\[
f(l_1 * l_2 *\dots * l_n)=l_1\odot l_2\odot \dots\odot l_n,
\]
and in fact it is an epimorphism of Leibniz $n$-algebras. Restriction of $f$ to the kernel of $u^{\abnleib}$ yields an epimorphism from $H_{2}(\LL,\abnleib)_{\G}$  to $H_{2}(\LL,\abnlie)_{\Hbb}$ and we obtain the following proposition, which is also a special case of~\cite[Proposition~3.3]{CVdL}.

\begin{proposition}\label{Proposition-UCE-Leibniz-vs-Lie}
When a Lie $n$-algebra $\LL$ is perfect with respect to $\Vect$, we have a short exact sequence
\[
\xymatrix@1{0 \ar[r] & H_{2}(U(\LL,\abnlie),\abnleib)_{\G} \ar[r] & H_{2}(\LL,\abnleib)_{\G} \ar[r] & H_{2}(\LL,\abnlie)_{\Hbb} \ar[r] & 0.}
\]
Moreover,
\[
\langle H_{2}(\LL,\abnleib)_{\G},\dots,H_{2}(\LL,\abnleib)_{\G}\rangle = H_{2}(U(\LL,\abnlie),\abnleib)_{\G},
\]
and $u_{\LL}^{\abnleib}=u_{\LL}^{\abnlie}$ if and only if $H_{2}(\LL,\abnleib)_{\G} \cong H_{2}(\LL,\abnlie)_{\Hbb}$.\noproof
\end{proposition}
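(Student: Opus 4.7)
The plan is to analyse the composite $u^{\abnleib}_{\LL}=u^{\abnlie}_{\LL}\comp f$ as a morphism of short exact sequences in $\nLeib$ and extract the desired information via the Snake Lemma.

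First I would show that $f\colon{U(\LL,\abnleib)\to U(\LL,\abnlie)}$ is itself the universal $\Vect$-central extension of $U(\LL,\abnlie)$ in $\nLeib$. Since $u^{\abnleib}_{\LL}=u^{\abnlie}_{\LL}\comp f$, the kernel satisfies $K[f]\subseteq K[u^{\abnleib}_{\LL}]=H_{2}(\LL,\abnleib)_{\G}$; and by Proposition~\ref{Proposition-Central-Extension} the latter is $\Vect$-central in $U(\LL,\abnleib)$, so its sub-ideal $K[f]$ is too. Together with the fact (already noted) that $f$ is a regular epimorphism, this makes $f$ a $\Vect$-central extension in $\nLeib$. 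Next, Proposition~\ref{UCE-Leibniz} applied to the universal extension $u^{\abnleib}_{\LL}$ gives $H_{1}(U(\LL,\abnleib),\abnleib)_{\G}=H_{2}(U(\LL,\abnleib),\abnleib)_{\G}=0$, so the same criterion identifies $f$ as universal with kernel $K[f]\cong H_{2}(U(\LL,\abnlie),\abnleib)_{\G}$.

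Second, the Snake Lemma applied to the comparison diagram
\[
\xymatrix@R=1.4em{0 \ar[r] & H_{2}(\LL,\abnleib)_{\G} \ar[r] \ar[d] & U(\LL,\abnleib) \ar[r]^-{u^{\abnleib}_{\LL}} \ar[d]_-{f} & \LL \ar[r] \ar@{=}[d] & 0 \\ 0 \ar[r] & H_{2}(\LL,\abnlie)_{\Hbb} \ar[r] & U(\LL,\abnlie) \ar[r]_-{u^{\abnlie}_{\LL}} & \LL \ar[r] & 0}
\]
yields, since the right-hand vertical is an isomorphism and $f$ is surjective, the short exact sequence of the proposition. The final ``if and only if'' is then immediate: $u^{\abnleib}_{\LL}=u^{\abnlie}_{\LL}$ is equivalent to $f$ being an isomorphism, hence to $K[f]=0$, hence to the connecting epimorphism $H_{2}(\LL,\abnleib)_{\G}\to H_{2}(\LL,\abnlie)_{\Hbb}$ being an isomorphism.

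The one step that is not purely formal, and the main obstacle, is the intermediate ``Moreover'' identification of $H_{2}(U(\LL,\abnlie),\abnleib)_{\G}$ with the relative commutator $\langle H_{2}(\LL,\abnleib)_{\G},\dots,H_{2}(\LL,\abnleib)_{\G}\rangle$ computed inside $U(\LL,\abnleib)$. I would obtain this by specialising the general categorical formula~\cite[Proposition~3.3]{CVdL} to the triangle of adjunctions $\abnleib=\abnlie\comp\nlie$; the underlying idea is that the kernel of the comparison map $f$ is generated precisely by the ``skew-symmetrisation defects'' in $U(\LL,\abnleib)$ killed when passing to $U(\LL,\abnlie)$, which, by the remark following the definition of $\langle-,\dots,-\rangle$, are exactly the generators of the relative commutator one gets by restricting attention to the central ideal $H_{2}(\LL,\abnleib)_{\G}$.
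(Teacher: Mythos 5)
Your treatment of the exact sequence and of the final equivalence is correct and in fact more self-contained than what the paper offers: the paper constructs $f$ explicitly, observes that its restriction to kernels is an epimorphism, and then simply quotes \cite[Proposition~3.3]{CVdL}, whereas you actually identify $K[f]$ with $H_{2}(U(\LL,\abnlie),\abnleib)_{\G}$ by recognising $f$ as a $\Vect$-central extension (its kernel being a subideal of the central ideal $K[u^{\abnleib}_{\LL}]$) whose domain satisfies $H_{1}(U(\LL,\abnleib),\abnleib)_{\G}=H_{2}(U(\LL,\abnleib),\abnleib)_{\G}=0$, hence as the universal $\Vect$-central extension of $U(\LL,\abnlie)$; the Snake Lemma then yields the short exact sequence, and the ``if and only if'' follows. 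That part of the argument is sound.

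The gap is in the ``Moreover'' identity, and the heuristic you give for it cannot be repaired as stated. The ideal $K=H_{2}(\LL,\abnleib)_{\G}$ is $\Vect$-central in $U(\LL,\abnleib)$, so by Proposition~\ref{Proposition-Central-Extension} every bracket $[k_{1},\dots,k_{n}]$ with the $k_{i}$ in $K$ vanishes; consequently, with the elementwise definition of $\langle-,\dots,-\rangle$ given in Section~\ref{Section-Relative}, every generator $\langle k_{1},\dots,k_{n}\rangle_{\sigma}$ with all arguments in $K$ is a difference of two zero brackets, and ``the relative commutator one gets by restricting attention to the central ideal'' is the zero ideal. So the mechanism you describe would prove that $H_{2}(U(\LL,\abnlie),\abnleib)_{\G}$ always vanishes, which contradicts the sequence being nontrivial. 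What one can verify directly from Constructions~\ref{Construction-UCE-nLb} and~\ref{Construction-UCE-nLie} is rather that $K[f]$ equals $\langle U(\LL,\abnleib),\dots,U(\LL,\abnleib)\rangle$, the kernel of the Liesation of the whole Leibniz UCE: its generators $l_{1}*\dots*l_{n}-\sgn(\sigma)\,l_{\sigma(1)}*\dots*l_{\sigma(n)}$ exhaust $K[f]$ precisely because $\LL$ is perfect. Passing from that to the left-hand side displayed in the proposition requires the categorical relative commutator of two normal subobjects used in \cite[Proposition~3.3]{CVdL}, which does not reduce to the elementwise ideal $\langle K,\dots,K\rangle$ of Section~\ref{Section-Relative}; you need to make explicit which notion of commutator is meant and check that the specialisation really produces the displayed formula, rather than arguing by restricting the generators to the central ideal.
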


In case $n=2$ we recover Proposition~4.4 in~\cite{CVdL}.

\section*{Acknowledgements}
We would like to thank Tomas Everaert for some comments and suggestions, and the fourth author wishes to thank the University of Vigo for its kind hospitality during his stay in Pontevedra.



\end{document}